\newdimen\plusheight
\def\+{\;\lower\plusheight\hbox{$+$}\;}  
\newdimen\minusheight
\def\-{\;\lower\minusheight\hbox{$-$}\;}
\newdimen\cdotsheight
\def\cds{\lower\cdotsheight\hbox{$\cdots$}}
\newcommand{\norm}[1]{\left\lVert#1\right\rVert}
\numberwithin{equation}{section}
\newtheorem{thm}{Theorem}[section]
\newtheorem{lem}[thm]{Lemma}
\newtheorem{cor}{Corollary}
\theoremstyle{definition}
\newtheorem{defn}{Definition}[section]
\newtheorem{exmp}{Example}[section]
\theoremstyle{remark}
\newtheorem{rem}{\bf{Remark}}
\newtheorem{note}{\bf{Note}}
\numberwithin{equation}{section}
\begin{document}

\setcounter {page}{1}
\title{ Rough $I$-convergence in cone metric spaces.}

\author{Amar Kumar Banerjee and Anirban Paul}

\address[A.K.Banerjee]{Department of Mathematics, The University of Burdwan, Golapbag, Burdwan-713104, West Bengal, India.}

\email{ akbanerjee1971@gmail.com, akbanerjee@math.buruniv.ac.in }

\address[A.Paul]{Department of Mathematics, The University of Burdwan, Golapbag, Burdwan-713104, West Bengal, India.}
         
\email{ paulanirban310@gmail.com}

\begin{abstract} Here we have studied the notion of rough $I$-convergence as an extension of the idea of rough convergence in a cone metric space using ideals. We have further introduced the notion of rough $I^*$-convergence of sequences in a cone metric space to find the relationship between rough $I$ and $I^*$-convergence of sequences.
\end{abstract}
\maketitle
\author{}

\textbf{Key words and phrases:}
Cone metric spaces, Rough convergence, Rough $I$-convergence, Rough $I^*$-convergence, (AP) condition.  \\

\textbf {(2010) AMS subject classification :} 40A05, 40A99. \\
\section{Introduction}
The concept of statistical convergence was given independently by Fast\cite{f} and Steinhaus\cite{q} as a generalization of ordinary convergence of real sequences. Lot of devolopments have been made in this area after the remakable works done in \cite{g,new4, p}. The idea of $I$-convergence was introduced by Kostyrko et al.\cite{h} as a generalization of statistical convergence using the idea of the ideal $I$ of subsets of the set of natural numbers. Later many more works have been carried out in this direction \cite{B, 2,4,3,6,t**,i,j}.\\
In 2001, Phu \cite{R2} introduced the idea of rough convergence and rough Cauchyness of sequences in a finite dimensional normed space. Later in 2003, he studied the same in an infinite dimensional normed space \cite{R4}. Using these ideas Aytar\cite{R1}, in 2008 gave the concept of rough statistical convergence of a sequence. In 2014, using the concepts of $I$-convergence and rough convergence D\"undar et al.\cite{R7} introduced the notion of rough $I$-convergence. Many more works have been done in different direction \cite{R6, new1, new2} by several authors using this idea given by Phu \cite{R2}. \\
The idea of cone metric spaces was given by Huang and Xian \cite{k}. In their paper they have replaced the distance between two points by the elements of a real Banach space. Obviously such space is a generalization of the notion of an ordinary metric space. Since then many more works have been carried out specially in the field of summability theory.\\
In \cite{5} Banerjee and Mondal have studied the idea of rough convergence of sequences in a cone metric space. In our paper we have studied the notion of rough $I$-convergence of sequences in a cone metric space and examine how far several results as valid in \cite{5} are affected. Also we have introduced here the idea of rough $I^*$-convergence of sequence in a cone metric space and obtain the relations between rough $I$ and $I^*$-convergence of sequences.
\section{Preliminaries}
We give the basic ideas of statistical convergence and then ideal convergence of real sequences and few definitions and notions related to these ideas are also furnished which will be needed in the sequel.\\
Let $K$ be a subset of the set of positive integers $\mathbb{N}$. Then the natural density of $K$ is given by $\delta(K)=\displaystyle{\lim_{n\to \infty}}\frac{|K_n|}{n}$, where $K_n=\left\{k\in K : k\leq n\right\}$ and $|K_n|$ denotes the number of elements in $K_n$.

\begin{defn}\cite{f}
A sequence $x=\left\{x_n\right\}$ of real numbers is said to be statistically convergent to $x$ if for any $\varepsilon >0$ the set $\left\{n\in \mathbb{N}: |x_n - x| \geq \varepsilon\right\}$ have natural density zero.
\end{defn}
Note that the idea of statistical convergence of sequences is a generalization ordinary convergence.\\
 A family $I \subset 2^\mathbb{N}$ of subsets of $X$ is said to be an ideal \cite{t} in $\mathbb{N}$ if the following conditions holds:
\\
$(i)\: A, B \in I \Rightarrow A\cup B\in I$\\
$(ii) \: \:A\in I, B\subseteq A \Rightarrow B\in I$

$I$ is called a non-trivial ideal in $\mathbb{N}$ if $I\neq \left\{\phi\right\}$  or $\mathbb{N}\notin I$. A non-trivial ideal in $\mathbb{N}$ is said to be admissible if $\left\{n\right\}\in I$ for each $n\in \mathbb{N}$.

Clearly if $I$ is a nontrivial ideal in $\mathbb{N}$ then the family of sets $F(I)=\left\{M\subset \mathbb{N} : \text{there exists}\: A\in I, M=\mathbb{N}\setminus A\right\} $ is a filter in $\mathbb{N}$. It is called the filter associated with the ideal $I$ .\\ 
An admissible ideal $I\subset 2^\mathbb{N}$ is said to satisfy the condition (AP) \cite{h} if for any sequence $\left\{A_1, A_2, \cdots \right\}$ of mutually disjoint sets in $I$, there is a sequence $\left\{B_1, B_2,\cdots\right\}$ of subsets of $\mathbb{N}$ such that $A_i \Delta B_i$ ($i=1,2,\cdots )$ is finite and $B=\displaystyle{\cup_{j\in\mathbb{N}}}B_j\in I$.
\begin{defn}\cite{h}
A sequence $x=\left\{x_n\right\}$ of real numbers is said to be $I$-convergent to $x$ if for any $\varepsilon >0$ the set $A(\varepsilon)=\left\{n\in \mathbb{N} : |x_n - x|\geq \varepsilon \right\}\in I$.
\end{defn}
\begin{defn}\cite{R7}
A sequence $x=\left\{x_n\right\}$ in a normed linear space is said to be $I$-bounded if there exists a $M(>0)\in \mathbb{R}$ such that the set $\left\{n\in \mathbb{N}: \norm{x_n}\geq M\right\}\in I$.
\end{defn}
We now give the idea of a cone metric space \cite{k} as follows:\\
Let $E$ be a real Banach space and $P\subset E$. Then $P$ is called a cone if and only if the following are satisfied:\\
$(i)$  $P$ is closed, non-empty and $P\neq \left\{0\right\}$ ($0$ be the zero element of $E$).\\
$(ii)$ $a, b\in \mathbb{R}$ and $a, b\geq 0$ then $x, y\in P$ implies $ax + by \in P$.\\
$(iii)$  $x\in P$ and $- x\in P$ implies that $x=0$ (zero element of $E$).

\indent Let $E$ be a real Banach space and $P$ be a cone in $E$. Then a partial ordering $\leq$ with respect to $P$ can be defined by $x\leq y$ if and only if $ y - x \in P$, whereas $x<y$ indicates $x\leq y$ and $x\neq y$, also $x<<y$ stands for $y -x \in int P$,\: $int P$ denotes the interior $P$.\\
The cone $P$ is called normal if there exists a  positive real number $K$ such that for all $x, y \in E$, $0\leq x\leq y$ implies $\norm{x}\leq K\norm{y}$. \\
The least positive number satisfying above is called the normal constant of $P$. In \cite{lastadd}, it is known that any cone metric space is a first countable Hausdorff topological space with topology induced by the open balls defined naturally for each element $z\in X$ and for every $(0<<)c\in E$.

\begin{defn}\cite{k}
Let $(X, d)$ be a cone metric space. A sequence $x=\left\{x_n\right\}$ in $X$ is said to be convergent to $x$ if for any $c\in E$ with $0<<c$ there is $N\in \mathbb{N}$ such that $d(x_n,x)<<c$ for all $n> N$.
\end{defn}
\begin{lem}\cite{k}\label{correction}
Let $(X, d)$ be a cone metric space and $P$ be a normal cone with normal constant $K$. Let $\left\{x_n\right\}$ be a sequence in $X$. Then $\left\{x_n\right\}$ converges to $x$ if and only if $d(x_n, x)\rightarrow 0$ as $n\rightarrow \infty$
\end{lem}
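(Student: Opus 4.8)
The plan is to prove the two implications separately, reading everything off from the definition of cone convergence (via $0<<c$) and from the two structural facts we are handed: the normality of $P$ and the fact that $\operatorname{int}P\neq\emptyset$ (implicit in the very meaning of $<<$). Normality is what I expect to drive the forward implication, while mere openness of $\operatorname{int}P$ should suffice for the converse.

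For the direction $(\Rightarrow)$, suppose $\{x_n\}$ converges to $x$ in the cone sense and fix $\varepsilon>0$. First I would produce an interior point of small norm: picking any $c_0\in\operatorname{int}P$, the scaled vector $c:=tc_0$ again lies in $\operatorname{int}P$ for every $t>0$ (since multiplication by a positive scalar is a homeomorphism preserving $P$), and by taking $t$ small I can arrange $K\norm{c}<\varepsilon$. Cone convergence then gives an $N$ with $d(x_n,x)<<c$ for all $n>N$, and since $d(x_n,x)\in P$ this means $0\leq d(x_n,x)\leq c$. Applying the normality inequality yields $\norm{d(x_n,x)}\leq K\norm{c}<\varepsilon$ for $n>N$, which is exactly $d(x_n,x)\to 0$.

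For the converse $(\Leftarrow)$, suppose $\norm{d(x_n,x)}\to 0$ and fix any $c$ with $0<<c$, i.e. $c\in\operatorname{int}P$. Since $\operatorname{int}P$ is open there is $\delta>0$ with the open ball $\{y\in E:\norm{y-c}<\delta\}\subseteq\operatorname{int}P$. Choosing $N$ so that $\norm{d(x_n,x)}<\delta$ for $n>N$, the point $c-d(x_n,x)$ lies within distance $\norm{d(x_n,x)}<\delta$ of $c$, hence lies in $\operatorname{int}P$; this says precisely $d(x_n,x)<<c$, so $\{x_n\}$ converges to $x$ in the cone sense.

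The step I expect to be the crux is the forward implication, since it is exactly the normality hypothesis that converts the order relation $0\leq d(x_n,x)\leq c$ into the scalar estimate $\norm{d(x_n,x)}\leq K\norm{c}$; without normality this passage fails. The only other points needing a little care are the production of interior vectors of arbitrarily small norm (handled above by scaling a fixed element of $\operatorname{int}P$) and the observation in the converse that an open ball sitting inside $P$ automatically sits inside $\operatorname{int}P$, which is what upgrades $\leq$ to the strict inequality $<<$.
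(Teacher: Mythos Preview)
Your argument is correct. Note, however, that the paper does not supply its own proof of this lemma: it is quoted verbatim from \cite{k} as a preliminary fact, so there is no in-paper proof to compare against. What you have written is essentially the standard proof from \cite{k}, and the two technical ingredients you isolate are in fact recorded later in the paper as separate lemmas cited from \cite{5}: your ``scale an interior point to make $K\norm{c}<\varepsilon$'' step is exactly Lemma~\ref{lem1}, and your ``$\operatorname{int}P$ is open, so $\norm{x}<\delta$ forces $c-x\in\operatorname{int}P$'' step is exactly Lemma~\ref{last}. So your proof not only recovers the cited result but also makes transparent why those two auxiliary lemmas are singled out elsewhere in the paper.
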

\begin{lem}\cite{lemma}\label{khani}
Let $(X, E)$ be a cone space with $x\in P$ and $y\in int P$. Then one can find $n\in\mathbb{N}$ such that $x<< ny$.
\end{lem}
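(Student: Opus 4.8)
The plan is to exploit that $y$ lies in the open set $\text{int}\,P$ together with the scaling invariance of a cone, reducing the claim to an elementary statement about perturbing $y$. First I would record the only topological input needed: since $y\in\text{int}\,P$ and $\text{int}\,P$ is open in $E$, there is a radius $\delta>0$ such that the open norm-ball $B(y,\delta)$ in $E$ satisfies $B(y,\delta)\subseteq P$. Because $B(y,\delta)$ is open and contained in $P$, it is automatically contained in $\text{int}\,P$ (equivalently, for $w$ with $\norm{w-y}<\delta$ the ball $B(w,\delta-\norm{w-y})$ lies inside $B(y,\delta)\subseteq P$ by the triangle inequality, so $w$ is an interior point).

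Next I would bring in the element $x\in P$ through the auxiliary points $y-\frac{1}{n}x$. Since $\norm{(y-\frac{1}{n}x)-y}=\frac{\norm{x}}{n}\to 0$ as $n\to\infty$, I would choose $n\in\mathbb{N}$ with $n>\norm{x}/\delta$ (any $n$ works if $x=0$); then $\norm{\frac{1}{n}x}<\delta$, so $y-\frac{1}{n}x\in B(y,\delta)\subseteq\text{int}\,P$ by the previous step.

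The final step is to rescale. The key algebraic fact about a cone is that for a positive scalar $\lambda$ the map $z\mapsto\lambda z$ is a homeomorphism of $E$ with $\lambda P=P$ (using closure of $P$ under nonnegative scaling), so it carries $\text{int}\,P$ onto $\text{int}\,P$. Applying this with $\lambda=n$ to the point $y-\frac{1}{n}x\in\text{int}\,P$ yields $n(y-\frac{1}{n}x)=ny-x\in\text{int}\,P$, which is precisely $x<<ny$ by the definition of $<<$, completing the argument.

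I expect the main obstacle to be the scaling invariance of the interior, namely justifying that $nz\in\text{int}\,P$ whenever $z\in\text{int}\,P$. Everything else is a direct continuity and triangle-inequality estimate, but this step genuinely uses the cone axioms rather than merely the Banach-space topology, and it is exactly what converts a small perturbation of $y$ into the large interior element $ny-x$.
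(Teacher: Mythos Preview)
The paper does not supply a proof of this lemma; it is quoted from \cite{lemma} (Khani--Pourmahdian) and stated without argument. So there is no ``paper's own proof'' to compare against, and your proposal must be judged on its own merits.

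Your argument is correct and is the standard one. The three steps --- pick a ball $B(y,\delta)\subseteq P$, choose $n$ with $\norm{x}/n<\delta$ so that $y-\tfrac{1}{n}x\in\text{int}\,P$, then scale by $n$ --- all go through as written. For the scaling step you can be even more direct than invoking a homeomorphism: if $z\in\text{int}\,P$ with $B(z,\epsilon)\subseteq P$, then $B(nz,n\epsilon)=nB(z,\epsilon)\subseteq nP\subseteq P$, so $nz\in\text{int}\,P$; this uses only the inclusion $nP\subseteq P$ from axiom~(ii) of a cone, not the equality $nP=P$. Incidentally, your proof never uses the hypothesis $x\in P$, so it actually establishes the stronger fact that for every $x\in E$ and $y\in\text{int}\,P$ there is $n\in\mathbb{N}$ with $x<<ny$.
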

Now we recall some useful results from \cite{5}.
\begin{thm}\cite{5}
Let $E$ be a real Banach space with cone $P$. If $x_0\in int P$ and $\alpha(>0)\in \mathbb{R}$ then $\alpha x_0\in int P$
\end{thm}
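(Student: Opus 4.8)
The plan is to exploit the fact that, in a normed space, multiplication by a fixed positive scalar acts as a dilation: it carries each open ball to an open ball with proportionally scaled radius, while the cone axioms guarantee that it maps $P$ into itself. Chaining these two observations will immediately exhibit an open ball around $\alpha x_0$ sitting inside $P$. As a preliminary step I would record the elementary consequence of the cone axioms that $\alpha P \subseteq P$ for every $\alpha \geq 0$: this is just condition $(ii)$ in the definition of a cone with $b=0$, since for $x\in P$ and $a=\alpha\geq 0$ we have $\alpha x = \alpha x + 0\cdot x \in P$.

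Next, since $x_0\in int\,P$, there exists $r>0$ with the open ball $B(x_0,r)\subseteq P$. I would then claim that the ball $B(\alpha x_0,\alpha r)$ is contained in $P$, which suffices because $\alpha r>0$. To see this, take any $y$ with $\norm{y-\alpha x_0}<\alpha r$. Since $\alpha>0$, dividing gives $\norm{\alpha^{-1}y-x_0}=\alpha^{-1}\norm{y-\alpha x_0}<r$, so $\alpha^{-1}y\in B(x_0,r)\subseteq P$. Applying the inclusion $\alpha P\subseteq P$ to the point $\alpha^{-1}y\in P$ yields $y=\alpha(\alpha^{-1}y)\in P$. Hence $B(\alpha x_0,\alpha r)\subseteq P$, so $\alpha x_0$ is an interior point of $P$.

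There is no genuine obstacle in this argument; the single point demanding care is the bookkeeping with the radii, namely verifying that the dilation by $\alpha$ sends $B(x_0,r)$ precisely onto $B(\alpha x_0,\alpha r)$, so that the scaling of balls and the invariance $\alpha P\subseteq P$ can be combined cleanly. Everything else reduces to a direct application of the cone axioms together with the definition of an interior point.
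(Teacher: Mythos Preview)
Your argument is correct. The paper itself does not prove this theorem: it is stated in the Preliminaries section and attributed to \cite{5}, with no proof supplied. There is therefore nothing in the present paper to compare your approach against. Your proof is the standard one: use cone axiom $(ii)$ to get $\alpha P\subseteq P$ for $\alpha\geq 0$, then observe that scalar multiplication by $\alpha>0$ is a homeomorphism of $E$ sending the open ball $B(x_0,r)\subseteq P$ onto $B(\alpha x_0,\alpha r)$, which must therefore also lie in $P$. Nothing is missing.
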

\begin{thm}\label{rahulda}\cite{5}
Let $E$ be a real Banach space and $P$ be a cone in $E$. If $x_0\in P$ and $y_0\in int P$ then $x_0 + y_0\in int P$.
\end{thm}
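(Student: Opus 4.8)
The plan is to exploit the fact that membership in $int\,P$ is witnessed by a full open ball lying inside $P$, together with the closure of the cone under addition that follows from condition $(ii)$ of the definition (taking $a=b=1$, so that $u,v\in P$ implies $u+v\in P$). First I would use $y_0\in int\,P$ to fix a radius $r>0$ for which the open ball $B(y_0,r)=\{w\in E:\norm{w-y_0}<r\}$ is entirely contained in $P$.

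Next I claim that the \emph{translated} ball $B(x_0+y_0,r)$ is contained in $P$, which would immediately give $x_0+y_0\in int\,P$. To verify the claim, take an arbitrary $z\in B(x_0+y_0,r)$, so that $\norm{z-(x_0+y_0)}<r$. Setting $w:=z-x_0$, we get $\norm{w-y_0}=\norm{z-(x_0+y_0)}<r$, hence $w\in B(y_0,r)\subseteq P$. Since $w\in P$ and $x_0\in P$, the additive closure of the cone yields $z=w+x_0\in P$. As $z$ was an arbitrary point of $B(x_0+y_0,r)$, this shows $B(x_0+y_0,r)\subseteq P$, and therefore $x_0+y_0$ is an interior point of $P$.

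The argument is essentially a translation-invariance observation, so I do not expect a serious obstacle; the only point needing care is recognizing that the \emph{same} radius $r$ survives the translation by $x_0$, and that the step $w+x_0\in P$ is exactly the instance $a=b=1$ of condition $(ii)$ rather than an appeal to convexity in any stronger form. One could alternatively phrase the statement through the order notation of the paper: $y_0\in int\,P$ is what is written $0<<y_0$, and the conclusion $0<<x_0+y_0$ then reads as a monotonicity of strict positivity under adding the positive element $x_0$; however, the metric-ball formulation above seems the cleanest route to a rigorous proof.
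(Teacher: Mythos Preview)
Your argument is correct: translating the open ball $B(y_0,r)\subseteq P$ by $x_0$ and using that $P$ is closed under addition (condition $(ii)$ with $a=b=1$) gives $B(x_0+y_0,r)\subseteq P$, hence $x_0+y_0\in int\,P$. There is nothing to compare against here, since the paper does not supply its own proof of this theorem; it is quoted from \cite{5} (Banerjee and Mondal) as a preliminary fact and used later only as a black box. Your translation-invariance proof is the standard one and would be exactly what one expects in \cite{5} as well.
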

\begin{cor}\cite{5}
If $x_0, y_0\in int P$ then $x_0 + y_0\in int P$.
\end{cor}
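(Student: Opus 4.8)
The plan is to deduce this immediately from Theorem \ref{rahulda}, exploiting the elementary topological fact that the interior of a set is always contained in the set itself. Concretely, since $\mathrm{int}\, P \subseteq P$, the hypothesis $x_0 \in \mathrm{int}\, P$ already gives $x_0 \in P$. This is the only observation that needs to be recorded before invoking the previous theorem.

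Having secured $x_0 \in P$, I would then simply apply Theorem \ref{rahulda} with this $x_0$ playing the role of the point of $P$ and $y_0 \in \mathrm{int}\, P$ playing the role of the interior point. That theorem asserts precisely that under these conditions $x_0 + y_0 \in \mathrm{int}\, P$, which is the desired conclusion.

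There is no genuine obstacle here; the corollary is a specialization of Theorem \ref{rahulda} obtained by strengthening the hypothesis on $x_0$ from membership in $P$ to membership in $\mathrm{int}\, P$. The entire argument reduces to noting the inclusion $\mathrm{int}\, P \subseteq P$ and one citation. If one wished to be self-contained rather than citing Theorem \ref{rahulda}, one could instead argue directly: choose open balls (in the sense of the induced norm topology of $E$) around $x_0$ and $y_0$ witnessing their interiority, and observe that translation by a fixed vector is a homeomorphism of $E$, so the sum of an open neighborhood of $x_0$ and the point $y_0$ is an open neighborhood of $x_0 + y_0$ contained in $P + \mathrm{int}\, P \subseteq P$; but since Theorem \ref{rahulda} is already available in the excerpt, the one-line reduction above is the cleaner route.
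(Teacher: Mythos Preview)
Your proposal is correct and matches the paper's intent: the corollary is stated immediately after Theorem~\ref{rahulda} (and cited from \cite{5}) precisely because it follows at once from that theorem via the inclusion $\mathrm{int}\,P \subseteq P$, exactly as you argue. No separate proof is given in the paper, and your one-line reduction is the intended derivation.
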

\begin{thm}\cite{5}
Let $E$ be a real Banach space with cone $P$, then $0\notin int P$ \:($0$ be the zero element of $E$).
\end{thm}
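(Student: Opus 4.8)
The plan is to argue by contradiction, exploiting the fact that an open ball centred at the origin is symmetric under negation, while property $(iii)$ of the cone forbids $P$ from containing any nonzero vector together with its negative.

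First I would assume, towards a contradiction, that $0\in int P$. By the definition of interior in the normed space $E$, there then exists a radius $r>0$ such that the open ball $B(0,r)=\left\{z\in E:\norm{z}<r\right\}$ is contained in $P$. Next I would produce a nonzero vector to scale: since $P\neq\left\{0\right\}$ while $0\in P$ (take $a=b=0$ in property $(ii)$ applied to any element of the nonempty set $P$), the space $E$ must contain some $y\neq 0$. Setting $x=\frac{r}{2\norm{y}}y$ gives $\norm{x}=r/2<r$, and likewise $\norm{-x}=r/2<r$, so that both $x$ and $-x$ lie in $B(0,r)\subseteq P$.

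Finally, property $(iii)$ of the cone, namely that $x\in P$ and $-x\in P$ imply $x=0$, forces $x=0$; but $x=\frac{r}{2\norm{y}}y\neq 0$ because $y\neq 0$ and $r>0$. This contradiction establishes that $0\notin int P$.

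Since every step is elementary, there is no genuine obstacle here; the only point demanding a little care is guaranteeing the existence of the nonzero vector $y$, which is exactly why the hypothesis $P\neq\left\{0\right\}$ is built into the definition of a cone. I would also remark that the symmetry of the ball about the origin is what makes this argument specific to the point $0$; the same reasoning does not apply to an arbitrary boundary point of $P$.
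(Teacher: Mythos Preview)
The paper does not actually supply a proof of this statement; it is quoted in the Preliminaries as a result from reference~\cite{5}, so there is nothing in the present paper to compare your argument against. Your proof is correct and is the standard one: the symmetry of an open ball about the origin, combined with axiom~$(iii)$ of the cone, immediately yields the contradiction, and your justification that $E$ contains a nonzero vector (via $P\neq\{0\}$) is exactly the right way to close the small gap.
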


\begin{defn}\cite{5}
Let $\left\{x_n\right\}$ be a sequence in a cone metric space $(X, d)$. A point $c\in X$ is said to be a cluster point of $\left\{x_n\right\}$ if for any $(0<<)\varepsilon$ in $E$ and for any $k\in\mathbb{N}$, there exists a $k_1\in \mathbb{N}$ such that $k_1> k$ with $d(x_{k_1}, c)<< \varepsilon$.
\end{defn}
The definition of $I$-convergent and $I^*$-convergent of a sequence in a cone metric is as follows:
\begin{defn}\cite{sudip}
Let $(X, d)$ be a cone metric space. A sequence $x=\left\{x_n\right\}$ in $X$ is said to be $I$-convergent to $x$ if for any $c\in E$ with $0<<c$ the set $\left\{n\in \mathbb{N} : c - d(x_n, x)\notin int P\right\}\in I$.
\end{defn}
\begin{defn}\cite{sudip}
Let $(X, d)$ be a cone metric space. A sequence $x=\left\{x_n\right\}$ in $X$ is said to be $I^*$-convergent to $x$ if and only if there exists a set $M\in F(I)$, $M=\left\{m_1< m_2<\cdots<m_k< \cdots\right\}$ such that $\left\{x_n\right\}_{n\in M}$ is convergent to $x$ i.e., for any $c\in E$ with $0<<c$, there exists $p\in \mathbb{N}$ such that $c - d(x_{m_k}, x)\in int P$ for all $k\geq p$.
\end{defn}
\begin{defn}\cite{R2}
Let $x=\left\{x_n\right\}$ be a sequence in a normed linear space $(X, \norm{.})$ and $r(\geq 0)\in \mathbb{R}$. Then $\left\{x_n\right\}$ is said to be rough convergent of roughness degree $r$ to $x$ if for any $\varepsilon >0$ there exists a natural number $N$ such that $\norm{x_n - x}< r + \varepsilon$ for all $n\geq k$. 
\end{defn}
 For $r=0$ we obtain the ordinary convergence of sequences.
\begin{defn}\cite{R7}
A sequence $x=\left\{x_n\right\}$ in a normed linear space is said to be rough $I$-convergent of roughness degree $r$ to $x^*$ for some $r\geq 0$ if for any $\varepsilon>0$ the set $\left\{n\in \mathbb{N} : \norm{x_n - x^*}\geq r + \varepsilon\right\}\in I$.\\
We denote this by $x_n\xrightarrow{r - I} x^*$.
\end{defn}
The definition of $I$-bounded sequence in a normed linear space has been given in \cite{R7} as follows:
\begin{defn}\cite{R7}
A sequence $\left\{x_n\right\}$ is said to be $I$-bounded if there exists a positive real number $M$ such that $\left\{n\in \mathbb{N} : || x_n||\geq M\right\}\in I$.
\end{defn}
\begin{defn}\cite{5}
A sequence $\left\{x_n\right\}$ in a cone metric space is said to be bounded if for any fixed $x\in X$ there exists a $(0<<)M\in E$ such that $d(x_n, x)<< M$ for all $n\in \mathbb{N}$.
\end{defn}
Now we recall the definition of rough convergence in  cone metric space from \cite{5}.
\begin{defn}\cite{5}
Let $(X, d)$ be a cone metric space. A sequence $x=\left\{x_n\right\}$ in $X$ is said to be rough convergent of roughness degree $r$ to $x$ for some $r\in E$ with $0<< r$ or $r=0$ if for any $\varepsilon$ with $(0<<)\varepsilon$ there exists a $m\in \mathbb{N}$ such that $d(x_n, x) << r +\varepsilon $ for all $n\geq m$.\\
We denote this by $x_n\xrightarrow{r }x$.
\end{defn}

\section{Main Results }

Throughout our discussion $(X,d)$ will always stands for a cone metric space where $d : X \times X \mapsto E$ is the cone metric and $E$ being a real Banach space. $I$ be a admissible ideal, $\mathbb{N}$ and $\mathbb{R}$ stands for the set of natural numbers and the set of real numbers respectively. $A^\complement$ denotes the complement of the set $A$ unless otherwise stated.

\begin{defn}
Let $(X, d)$ be a cone metric space. A sequence $x=\left\{x_n\right\}$ in $X$ is said to be rough $I$-convergent of roughness degree $r$ to $x^*\in X$  for some $r\in E$ with $0<<r$ or $r=0$ if for any $(0<<)\varepsilon \in E$ the set $A(\varepsilon)=\left\{n\in \mathbb{N}: (r + \varepsilon -d(x_n, x^*) )\notin int P\right\}\in I$.
\end{defn}
We denote this by $x_n\xrightarrow {r - I} x^*$. For $r=0$ the definition reduces to the definition of $I$-convergence of sequence in a cone metric space. If a sequences $x=\left\{x_n\right\}$ is rough $I$-convergent of roughness degree $r$ to $x^*\in X$ then $x^*$ is called the rough $I$-limit of $x=\left\{x_n\right\}$. In general, the rough $I$-limit of a sequence $x=\left\{x_n\right\}$ is not unique which can be seen from the next  example. So the set of all rough $I$-limits of a sequence $x=\left\{x_n\right\}$ denoted by $I-LIM^r x$ is called the rough $I$-limit set of a sequence $x=\left\{x_n\right\}$ i.e., $I-LIM^r x :=\left\{x^*\in X : x_n\xrightarrow {r - I} x^*\right\}$. \\
Therefore, a sequence $x=\left\{x_n\right\}$ is said to be rough $I$-convergent in a cone metric space if $I-LIM^r x\neq \phi$
\begin{exmp}\label{ex1}
Let $X=\mathbb{R}$, $E=\mathbb{R}^2$, $P=\left\{(x, y)\in E : x, y \geq 0\right\} \subset E$ and $d : X \times X \mapsto E$ be such that $d(x, y)=(|x - y|, |x -y|)$. Then $(X, d)$ is a cone metric space. Now let us consider the ideal in $\mathbb{N}$ which consists of sets whose natural density are zero i.e., $I=I_d$. Now, let us consider the sequence $x=\left\{x_n\right\}$ in $X$ defined by $x_n=\begin{cases}
(-1)^n , \: &
\text{if} \: n\neq k^2(\text{where} \:k \in \mathbb{N}) \\
n, \: &
\text{otherwise}
\end{cases}$. Now we can see that for any $r=(r_1, r_2)\in E$ with $0<< r$ , if $\min( {r_1, r_2}) =r^*$ and $r^* \geq 1$ then $I-LIM^r x=[-(r^* - 1), (r^* - 1)]$ if $r^*\geq 1$, since for any $x\in [-(r^* - 1), (r^* - 1)]$ and $r^*\geq 1$ we have $\{n\in \mathbb{N} : (r + \varepsilon - d(x_n, x)\notin int P\}\subset \{1^2, 2^2, 3^2, \cdots\}$, therefore $\{n\in \mathbb{N} : (r + \varepsilon - d(x_n, x)\notin int P\}\in I$ and if $r^* < 1$ or $r =0$ then $I-LIM^r x=\phi$. Also since the sequence is unbounded therefore $LIM^r x=\phi$, for any $r$. 
\end{exmp} 
\begin{note}\label{note1}
From the above example we can see that in general $I-LIM^r x\neq \phi$ does not imply that $LIM^r x \neq \phi$. But since $I$ is an admissible ideal therefore $LIM^r x\neq \phi$ implies $I-LIM^r x\neq\phi$. That is, if a sequence $x=\left\{x_n\right\}$ in $(X, d)$ is rough convergent of  roughness degree $r$, where $r\in E$ with $0<< r$ or $r=0$,  then it is also rough $I$-convergent of same roughness degree $r$. Therefore if we denote all rough convergence sequences in a cone metric space $(X, d)$ by $LIM^r$ and the set of all rough $I$-convergent sequences by $I-LIM^r$, then we have $LIM^r \subseteq I-LIM^r$.
\end{note}
It is seen in \cite{5} that if a sequence $x=\{x_n\}$ in a cone metric space $(X, d)$ is bounded then $LIM^r x\neq \phi$ for some $(0<<)r \in E$. So in view of note \ref{note1}, the following theorem is evident.

\begin{thm}\label{cor1}
If a sequence $x=\left\{x_n\right\}$ in a cone metric space $(X, d)$ is bounded, then there exists some $r\in E$ with $0<< r$ such that $I-LIM^r x\neq \phi$.
\end{thm}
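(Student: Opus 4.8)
The plan is to obtain this statement as an immediate consequence of the boundedness result quoted from \cite{5} combined with the inclusion recorded in Note \ref{note1}. First I would invoke the cited fact directly: since $x=\{x_n\}$ is bounded in $(X,d)$, there exists some $r\in E$ with $0<<r$ for which the ordinary rough limit set is nonempty, i.e. $LIM^r x\neq\phi$. Fixing such an $r$, I pick any $x^*\in LIM^r x$, so that $x_n\xrightarrow{r}x^*$, and the goal reduces to showing $x^*\in I-LIM^r x$.

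The second step is to promote this rough convergence to rough $I$-convergence, which is exactly the content of Note \ref{note1} but worth making explicit here. By definition of $x_n\xrightarrow{r}x^*$, for every $(0<<)\varepsilon\in E$ there is an $m\in\mathbb{N}$ with $d(x_n,x^*)<<r+\varepsilon$ for all $n\geq m$; equivalently $(r+\varepsilon-d(x_n,x^*))\in int P$ whenever $n\geq m$. Hence the exceptional set satisfies
\[
A(\varepsilon)=\left\{n\in\mathbb{N}:(r+\varepsilon-d(x_n,x^*))\notin int P\right\}\subseteq\{1,2,\dots,m-1\},
\]
which is finite. Since $I$ is admissible, each singleton $\{n\}$ lies in $I$, and because ideals are closed under finite unions, every finite subset of $\mathbb{N}$ belongs to $I$. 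Therefore $A(\varepsilon)\in I$ for every $\varepsilon$, which is precisely the assertion $x_n\xrightarrow{r-I}x^*$, so $x^*\in I-LIM^r x$.

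Combining the two steps yields $\phi\neq LIM^r x\subseteq I-LIM^r x$, and hence $I-LIM^r x\neq\phi$ for this choice of $r$, completing the argument. I do not expect any genuine obstacle: the statement is essentially a corollary, and the only point requiring care is the admissibility argument that places the finite exceptional set $A(\varepsilon)$ into $I$ — exactly the reasoning underlying the inclusion $LIM^r\subseteq I-LIM^r$ already established in Note \ref{note1}. If one prefers a one-line proof, it suffices to cite the boundedness result of \cite{5} to get $LIM^r x\neq\phi$ and then apply Note \ref{note1} to conclude.
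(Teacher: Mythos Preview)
Your proposal is correct and matches the paper's approach exactly: the paper itself treats this theorem as evident, deducing it from the boundedness result in \cite{5} (which gives $LIM^r x\neq\phi$ for some $0<<r$) together with the inclusion $LIM^r x\subseteq I-LIM^r x$ of Note~\ref{note1}. You have simply spelled out the admissibility argument behind Note~\ref{note1} in more detail than the paper does, which is fine.
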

We recall that a sequence $\{x_n\}$ in a metric space $(X, d)$ is said to be bounded if there exists $x\in X$ and $r> 0$ satisfying $d(x_n ,x)< r \: \text{for all}\: n\in \mathbb{N}$. Using this idea we define $I$-bounded sequence in a cone metric space as follows:  
\begin{defn}
A sequence $x=\left\{x_n\right\}$ in a cone metric space $(X, d)$ is said to be $I$-bounded if there exists a $y\in X$ and $M\in E$ with $0<< M$ such that $\left\{n\in \mathbb{N} : M - d(x_n, y)\not\in int P\right\}\in I$.
\end{defn}
Let $\{x_n\}$ be bounded sequence in a cone metric space $(X, d)$, then there exists $x\in X$ and $M\in E$ with $0<< M$ such that $d(x, x_n) << M$ for all $n\in \mathbb{N}$. This implies that $M- d(x, x_n)\in int P \: \text{for all}\: n\in \mathbb{N}$. So $\{n\in \mathbb{N} : M - d(, x_n)\notin int P\}= \phi\in I$. Hence $\{x_n\}$ is $I$-bounded. But the converse may not be true as seen in the example \ref{ex1}. For, if we choose $y= 2$ and $(0<<)M=(5,6)$ then we get $\{n\in \mathbb{N} : (M - d(x_n, y))\notin int P\}\subset \{1^2, 2^2, 3^2, \cdots\}$, which implies that $\{n\in \mathbb{N} : M - d(x_n, y)\notin int P\}\in I$. So the sequence considered here is $I$-bounded although the sequence is not bounded.\\

From the example {\ref{ex1}  it follows that the reverse implication of the theorem \ref{cor1} is not valid, however the reverse implication is true in case of $I$-boundedness as seen in the following theorem.
\begin{thm}
Let $I$ be an admissible ideal of $\mathbb{N}$. Then a sequence $x=\left\{x_n\right\}$ in $(X, d)$ is $I$-bounded if and only if there exists some $r\in E$ with $0<< r$ or $r=0$ such that $I-LIM^r x \neq \phi$.
\end{thm}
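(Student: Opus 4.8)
The plan is to prove both implications by directly translating between the defining set-conditions for $I$-boundedness and for membership in $I-LIM^r x$, exploiting that each condition is phrased through the failure of an interior-cone membership. In both directions the substantive content is a set inclusion inside the ideal, so I would first isolate the relevant ``bad'' set and then show it lies in $I$ by the hereditary axiom.

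For the forward implication, I would assume $x$ is $I$-bounded, so there exist $y\in X$ and $M\in E$ with $0<<M$ for which the set $B:=\{n\in\mathbb{N}: M-d(x_n,y)\notin\text{int }P\}$ lies in $I$. My claim is that $y\in I-LIM^r x$ with the choice $r=M$. To verify this I would fix any $\varepsilon$ with $0<<\varepsilon$ and examine $A(\varepsilon)=\{n\in\mathbb{N}: (r+\varepsilon-d(x_n,y))\notin\text{int }P\}$. For $n\notin B$ one has $M-d(x_n,y)\in\text{int }P$; since also $\varepsilon\in\text{int }P$, Theorem \ref{rahulda} yields $(M-d(x_n,y))+\varepsilon=r+\varepsilon-d(x_n,y)\in\text{int }P$, so $n\notin A(\varepsilon)$. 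Hence $A(\varepsilon)\subseteq B\in I$, and by the hereditary property of the ideal $A(\varepsilon)\in I$. As $\varepsilon$ was arbitrary this gives $x_n\xrightarrow{r-I}y$, whence $I-LIM^r x\neq\phi$.

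For the reverse implication, I would assume $x^*\in I-LIM^r x$ for some $r\in E$ with $r=0$ or $0<<r$. The idea is to absorb a fixed buffer into $r$ to manufacture a boundedness modulus. I would fix one $\varepsilon_0$ with $0<<\varepsilon_0$, set $y=x^*$, and put $M=r+\varepsilon_0$. First I would check $0<<M$: if $r=0$ this is immediate, while if $0<<r$ it follows from Theorem \ref{rahulda} applied to $r\in P$ and $\varepsilon_0\in\text{int }P$. Then the $I$-boundedness set for this choice, namely $\{n\in\mathbb{N}: M-d(x_n,x^*)\notin\text{int }P\}$, is literally the set $A(\varepsilon_0)$ occurring in the definition of $x_n\xrightarrow{r-I}x^*$, and so lies in $I$. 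Therefore $x$ is $I$-bounded.

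I do not anticipate a deep obstacle; the argument is essentially a definitional matching. The one point demanding care is the interior-cone arithmetic, namely that adding an interior element $\varepsilon$ (respectively $\varepsilon_0$) to an element known only to sit in $\text{int }P$ still lands in $\text{int }P$, and that the candidate modulus $M=r+\varepsilon_0$ genuinely satisfies $0<<M$ across both the $r=0$ and $0<<r$ cases. Both facts are handled uniformly by Theorem \ref{rahulda}, so the remaining verifications are routine.
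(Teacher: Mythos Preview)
Your proof is correct and follows essentially the same approach as the paper's: in the forward direction you take the boundedness modulus as the roughness degree and establish the inclusion $A(\varepsilon)\subseteq B$ via Theorem~\ref{rahulda}, and in the converse you set $M=r+\varepsilon_0$ so that the $I$-boundedness condition becomes literally one instance of the rough $I$-convergence condition. The only differences from the paper are cosmetic (naming the modulus $M$ rather than $r$, and being slightly more explicit about invoking Theorem~\ref{rahulda}).
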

\begin{proof}
Let the sequence $x=\left\{x_n\right\}$ be $I$-bounded. Then there exists a $y\in x$ and $(0<<)r \in E$ such that the set $\left\{n \in \mathbb{N}: r - d(x_n , y) \not \in int P\right\}\in I$. Let $(0<<)\varepsilon\in E$ ( i.e., $ \varepsilon \in int P$ ). Then $\{n\in\mathbb{N} : r + \varepsilon - d(x_n ,y)\notin int P\} \subset \{n\in \mathbb{N} : r - d(x_n, y) \notin int P\}\in I$ ( For, let $n\in \{n\in \mathbb{N}: r + \varepsilon - d(x_n, y)\notin int P\}\Rightarrow r + \varepsilon - d(x_n,y)\notin int P$. So $r - d(x_n, y)\notin int P\Rightarrow n\in \{ r - d(x_n, y)\notin int P\}$ ). Therefore $y\in I-LIM^r x $.\\
Conversely, let $I-LIM^r x\neq \phi$ for some $r\in E$ with $0<< r$ or $r=0$ and $x^*\in I-LIM^r x$. Therefore for any $(0<<)\varepsilon\in E$ (i.e., $\varepsilon \in int P$ ) the set $\left\{n\in \mathbb{N} : r + \varepsilon -d(x_n,x^*)\notin int P\right\}\in I$. Now $r + \varepsilon\in int P$ for any $\varepsilon\in int P$. So taking $M = r +\varepsilon\in int P$ ( i.e., $0<< M$), we have $\left\{n\in \mathbb{N} : M -d(x_n,x^*)\notin int P\right\}\in I$. So the sequence $x=\left\{x_n\right\}$ is $I$-bounded.
\end{proof}
\begin{thm}
An $I$-bounded sequence $x=\left\{x_n\right\}$ in a cone metric space $(X, d)$ always contains a subsequence which is rough $I$-convergent of roughness degree $r$ for some $(0<<)r\in E$.
\end{thm}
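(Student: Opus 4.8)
The plan is to extract a genuinely bounded subsequence from the $I$-bounded sequence and then invoke the boundedness result recalled from \cite{5} together with Note \ref{note1}. First, by $I$-boundedness there exist $y\in X$ and $M\in E$ with $0<<M$ such that the set $K=\left\{n\in\mathbb{N}: M-d(x_n,y)\notin int P\right\}$ belongs to $I$. Its complement $K^\complement=\left\{n\in\mathbb{N}: d(x_n,y)<<M\right\}$ then lies in the associated filter $F(I)$. Since $I$ is admissible (hence non-trivial), $K^\complement$ cannot be finite: otherwise $K^\complement\in I$, and then $\mathbb{N}=K\cup K^\complement\in I$, contradicting $\mathbb{N}\notin I$. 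Thus $K^\complement$ is an infinite set, say $K^\complement=\left\{n_1<n_2<\cdots\right\}$, and we may form the subsequence $\left\{x_{n_k}\right\}$.

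Next I would verify that $\left\{x_{n_k}\right\}$ is bounded in the strict sense used in \cite{5}. Indeed, every index $n_k$ lies in $K^\complement$, so $M-d(x_{n_k},y)\in int P$, i.e. $d(x_{n_k},y)<<M$ for all $k\in\mathbb{N}$. Taking the fixed point $y$ together with the bound $M$ shows precisely that $\left\{x_{n_k}\right\}$ is a bounded sequence in $(X,d)$, with the bound holding for every term of the subsequence rather than merely on a filter-large set.

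Now I would appeal to the fact recalled from \cite{5} (the same fact underlying Theorem \ref{cor1}): a bounded sequence in a cone metric space has non-empty rough limit set, so there exists some $r\in E$ with $0<<r$ such that $LIM^r\left\{x_{n_k}\right\}\neq\phi$, that is, $\left\{x_{n_k}\right\}$ is rough convergent of roughness degree $r$. Finally, by Note \ref{note1}, since $I$ is admissible, rough convergence of degree $r$ entails rough $I$-convergence of the same degree $r$. Hence $\left\{x_{n_k}\right\}$ is rough $I$-convergent of roughness degree $r$ for some $(0<<)r\in E$, which is the desired subsequence.

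The boundedness bookkeeping is routine; the only point demanding care is guaranteeing that $K^\complement$ is infinite so that an honest subsequence exists, which is exactly where admissibility (non-triviality) of $I$ is essential. I do not anticipate a serious obstacle beyond correctly matching the notion of boundedness of the extracted subsequence to the hypotheses of the cited theorem of \cite{5}, and confirming that the ideal $I$ applied to the reindexed subsequence $\left\{x_{n_k}\right\}_{k\in\mathbb{N}}$ is the same admissible ideal so that Note \ref{note1} applies verbatim.
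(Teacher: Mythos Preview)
Your proposal is correct and follows essentially the same route as the paper: pass to the filter-large set $K^\complement\in F(I)$, observe that the corresponding subsequence is bounded in the sense of \cite{5}, invoke the result from \cite{5} that bounded sequences have non-empty rough limit set, and then apply Note~\ref{note1}. Your version is in fact a bit more careful than the paper's, since you explicitly justify that $K^\complement$ is infinite (so a genuine subsequence exists) and you flag the reindexing issue when applying Note~\ref{note1} to the subsequence---a point the paper leaves implicit.
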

\begin{proof}
Let a sequence $x=\left\{x_n\right\}$ in a cone metric space $(X, d)$ be $I$-bounded. Therefore there exists a $z\in X$ and $(0<<)M\in E$ such that the set $\left\{n\in \mathbb{N}: M - d(x_n, z)\notin int P\right\}\in I$. Therefore the set $L=\left\{n\in \mathbb{N}: M - d(x_n, z)\in int P\right\}\in F(I)$. Now if we consider the subsequence $\left\{x_n\right\}_{n\in L}$ then this subsequence is bounded. Now as for any bounded sequence $x=\left\{x_n\right\}$,\: $LIM^r x\neq\phi$ for some $(0<<)r \in E$, so the subsequence $\left\{x_n\right\}_{n\in L}$ is rough convergent of roughness degree $r$ ($(0<<)r \in E$). Hence in view of note \ref{note1} $\left\{x_n\right\}_{n\in L}$ is also rough $I$-convergent of same roughness degree $(0<<)r\in E$. 
\end{proof}
\begin{thm}
Let $\left\{x_n\right\}$ be a sequence in a cone metric space $(X, d)$ which is $I$-convergent to $x$. If $\left\{y_n\right\}$ is another sequence in $(X, d)$ such that $d(x_n, y_n)\leq r$ for some $ (0<<)r \in E$ and for all $n\in \mathbb{N}$. Then $\left\{y_n\right\}$ is rough $I$-convergent of roughness degree $r$ to $x$.
\end{thm}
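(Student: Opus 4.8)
The plan is to reduce the rough $I$-convergence of $\{y_n\}$ to the $I$-convergence of $\{x_n\}$ by comparing the two relevant index sets, exploiting the triangle inequality for the cone metric together with Theorem \ref{rahulda}.

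First I would fix an arbitrary $\varepsilon \in E$ with $0 << \varepsilon$ and set $A(\varepsilon) = \left\{n \in \mathbb{N} : (r + \varepsilon - d(y_n, x)) \notin int P\right\}$, the set whose membership in $I$ must be established. Since $\{x_n\}$ is $I$-convergent to $x$, applying the definition of $I$-convergence with $c = \varepsilon$ gives $B(\varepsilon) = \left\{n \in \mathbb{N} : (\varepsilon - d(x_n, x)) \notin int P\right\} \in I$. The goal is then to show $A(\varepsilon) \subseteq B(\varepsilon)$, after which the conclusion $A(\varepsilon) \in I$ follows immediately from the downward-closedness property $(ii)$ of the ideal $I$.

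The heart of the argument is the inclusion $A(\varepsilon) \subseteq B(\varepsilon)$, which I would prove by contraposition. Assuming $n \notin B(\varepsilon)$, i.e. $\varepsilon - d(x_n, x) \in int P$, I would derive $r + \varepsilon - d(y_n, x) \in int P$. By the triangle inequality for the cone metric and the hypothesis $d(x_n, y_n) \leq r$, we have $d(y_n, x) \leq d(y_n, x_n) + d(x_n, x) \leq r + d(x_n, x)$, so that $r + d(x_n, x) - d(y_n, x) \in P$. Writing
\[
r + \varepsilon - d(y_n, x) = \big(r + d(x_n, x) - d(y_n, x)\big) + \big(\varepsilon - d(x_n, x)\big),
\]
the first summand lies in $P$ and the second in $int P$, so Theorem \ref{rahulda} yields that the sum lies in $int P$. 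Hence $n \notin A(\varepsilon)$, which establishes the contrapositive and therefore the inclusion.

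The main point to watch is that in a cone metric space one cannot manipulate inequalities as freely as in $\mathbb{R}$; the cone order only lets us combine elements of $P$ and $int P$. The decomposition above is arranged precisely so that Theorem \ref{rahulda} applies, converting a $P$-membership and an $int P$-membership into the desired $int P$-membership. Everything else is routine: the choice $c = \varepsilon$ in the definition of $I$-convergence and the appeal to the ideal's downward closure to pass from $A(\varepsilon) \subseteq B(\varepsilon) \in I$ to $A(\varepsilon) \in I$. Since $\varepsilon$ was arbitrary, this shows that $\{y_n\}$ is rough $I$-convergent of roughness degree $r$ to $x$.
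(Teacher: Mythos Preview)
Your proof is correct and follows essentially the same approach as the paper: both use the triangle inequality together with the hypothesis $d(x_n,y_n)\leq r$ to get $r+d(x_n,x)-d(y_n,x)\in P$, then add this to $\varepsilon-d(x_n,x)\in int P$ via Theorem \ref{rahulda} to conclude $r+\varepsilon-d(y_n,x)\in int P$. The only cosmetic difference is that the paper phrases the final step through the filter $F(I)$ (the good set contains a filter set), whereas you phrase it through the ideal via the inclusion $A(\varepsilon)\subseteq B(\varepsilon)$; these are equivalent.
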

\begin{proof}
Let $\left\{x_n\right\}$ be a sequence in a cone metric space $(X, d)$ which is $I$-convergent to $x$. Therefore for an $(0<<) \varepsilon\in E$, the set $\left\{n\in \mathbb{N} : \varepsilon - d(x_n, x)\notin int P\right\}\in I$. So $\left\{n\in \mathbb{N} : \varepsilon - d(x_n, x) \in int P\right\}\in F(I)$. Now $d(y_n ,x) \leq d(y_n,x_n) + d(x, x_n)\leq r + d(x_n , x)$. This implies that $r + d(x_n, x) -d(y_n, x) \in P$. Hence if $\varepsilon - d(x_n, x)\in int P$ then $( r + d(x_n, x) -d( y_n, x)) + (\varepsilon - d(x_n, x))= r + \varepsilon - d(y_n, x)\in int P$. Therefore the set $\left\{n\in \mathbb{N} : r + \varepsilon - d(y_n, x) \in int P\right\}\in F(I)$. Thus $\left\{n\in \mathbb{N} : r + \varepsilon - d(y_n, x) \notin int P\right\}\in I$. Hence the results follows.
\end{proof}
\begin{thm}
Let $x=\left\{x_n\right\}$ be a sequence in a cone metric space $(X, d)$ which is rough $I$-convergent of roughness degree $r$ for some $(0<<)r\in E$. Then there does not exists $y, z\in I-LIM^r x$ such that $m r < d(y, z)$, where $m$ is a real number grater than $2$.
\end{thm}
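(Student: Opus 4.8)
The plan is to argue by contradiction, running the familiar ``diameter $\le 2r$'' argument for rough limit sets but entirely inside the cone order, so that the only real work is handling the $int P$ relations correctly rather than producing any new estimate. Suppose $y, z \in I-LIM^{r}x$ and $mr < d(y,z)$ for some real $m > 2$; write $m = 2 + \delta$ with $\delta > 0$.

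First I would fix one convenient tolerance. Since $r \in int P$ and $\tfrac{\delta}{4} > 0$, the scaling theorem of \cite{5} gives $\varepsilon := \tfrac{\delta}{4}\, r \in int P$, so $0 << \varepsilon$ is an admissible choice in the definition of rough $I$-convergence. Because $y, z \in I-LIM^{r}x$, both sets $A_y(\varepsilon) = \{n : r + \varepsilon - d(x_n, y) \notin int P\}$ and $A_z(\varepsilon) = \{n : r + \varepsilon - d(x_n, z) \notin int P\}$ lie in $I$. Hence their union is in $I$ and its complement $M = \mathbb{N} \setminus (A_y(\varepsilon) \cup A_z(\varepsilon)) \in F(I)$. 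Since $I$ is nontrivial, $\mathbb{N} \notin I$, so $\emptyset \notin F(I)$ and $M \neq \phi$; I can therefore pick an index $n_0 \in M$, for which $d(x_{n_0}, y) << r + \varepsilon$ and $d(x_{n_0}, z) << r + \varepsilon$ hold simultaneously.

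Next comes the combining step. Adding the relations $r + \varepsilon - d(x_{n_0}, y) \in int P$ and $r + \varepsilon - d(x_{n_0}, z) \in int P$ gives, by the corollary on sums of interior points, $2r + 2\varepsilon - \big(d(x_{n_0}, y) + d(x_{n_0}, z)\big) \in int P$. The triangle inequality $d(y,z) \le d(y, x_{n_0}) + d(x_{n_0}, z)$ means $\big(d(x_{n_0}, y) + d(x_{n_0}, z)\big) - d(y,z) \in P$, and then Theorem \ref{rahulda} (a point of $P$ plus a point of $int P$ stays in $int P$) yields $2r + 2\varepsilon - d(y,z) \in int P$, i.e. $d(y,z) << 2r + 2\varepsilon = \big(2 + \tfrac{\delta}{2}\big) r$.

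Finally I would extract the contradiction. The assumption $mr < d(y,z)$ gives $d(y,z) - (2 + \delta) r \in P$. Adding this to $\big(2 + \tfrac{\delta}{2}\big) r - d(y,z) \in int P$ and applying Theorem \ref{rahulda} once more produces $-\tfrac{\delta}{2}\, r \in int P \subset P$. But $\tfrac{\delta}{2} > 0$ and $r \in int P$ force $\tfrac{\delta}{2}\, r \in int P \subset P$ by the scaling theorem, so both $\tfrac{\delta}{2}\, r$ and its negative lie in $P$; the cone axiom (iii) then forces $\tfrac{\delta}{2}\, r = 0$, contradicting the fact that $0 \notin int P$. The only obstacle here is bookkeeping: keeping straight which quantities sit in $int P$ versus merely in $P$, and invoking exactly the right one of the three order lemmas (scaling, $P + int P \subset int P$, and $0 \notin int P$) at each step; there is no genuinely hard analytic estimate to overcome.
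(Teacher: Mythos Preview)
Your proof is correct and follows essentially the same route as the paper's: argue by contradiction, apply the definition of rough $I$-convergence at both $y$ and $z$ with a suitable tolerance, pick an index in the intersection of the two filter sets, combine via the triangle inequality to get $2r + (\text{something small}) - d(y,z) \in int\,P$, and then add the hypothesis $d(y,z) - mr \in P$ to force an element that should not be in $int\,P$ to land there. The only cosmetic differences are that the paper uses tolerance $\varepsilon/2$ in each of the two sets (so the sum produces $2r+\varepsilon$) and then specializes $\varepsilon = (m-2)r$ to obtain $0 \in int\,P$ directly, whereas you fix $\varepsilon = \tfrac{\delta}{4}r$ up front and reach the contradiction through $-\tfrac{\delta}{2}r \in int\,P$ together with the cone axiom (iii); your justification that $M \neq \phi$ is actually more careful than the paper's.
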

\begin{proof}
Suppose on contrary that there exists  such $y, z\in I-LIM^r x$ for which $mr < d(y, z)$ and $m(\in \mathbb{R}) > 2$. Let $(0<<)\varepsilon $ be arbitrarily chosen in $E$. Now as $y, z\in I-LIM^r x$, so each of the sets $M_1=\left\{n\in \mathbb{N} : r + \frac{\varepsilon}{2} -d(x_n, y)\notin int P\right\}$ and $M_2=\left\{n\in \mathbb{N} : r + \frac{\varepsilon}{2} -d(x_n , z)\notin int P\right\}$ belongs to $I$. Then both of $M_1^\complement$ and $M_2^\complement$ belongs to $F(I)$. Let $p\in M_1^\complement \cap M_2^\complement$. Then $ r +\frac{\varepsilon}{2} -d(x_p, y)\in int P$ and $ r+\frac{\varepsilon}{2} - d(x_p, z)\in int P$. Hence $(r + \frac{\varepsilon}{2} - d(x_p , y) ) + ( r + \frac{\varepsilon}{2} - d(x_p, z)) = 2r + \varepsilon- ( d(x_p, y) + d(x_p , z) ) \in int P$. Now $d(y, z)\leq d(x_p, y) + d(x_p, z)$. So $d(x_p, y) + d(x_p , z) - d(y, z)\in P$. Therefore  $(2r + \varepsilon - (d(x_p, y) + d(x_p, z))) + ( d(x_p, y) + d(x_p, z)  - d(y,z))= 2r + \varepsilon - d(y, z) \in int P$. Again by our assumption $ d(y ,z) - mr \in P$. So $( 2r + \varepsilon - d(y, z)) + ( d(y, z) - mr )=2r + \varepsilon - mr  \in int P$. That is $\varepsilon - r (m - 2) \in int P$. But choosing $\varepsilon= r(m -2)$ we get $0 \in int P$, which is a contradiction. Hence the result follows.
\end{proof}
\begin{thm}\label{th 5}
Let $\left\{x_n\right\}$ be a sequence in $(X, d)$ which is rough $I$-convergent of roughness degree $r$. Then $\left\{x_n\right\}$ is also rough $I$-convergent of roughness degree $r_1$ for any $r_1$ with $r< r_1$.
\end{thm}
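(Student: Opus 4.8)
The plan is to show that the \emph{same} limit point serves for the larger roughness degree: if $x^*\in I-LIM^r x$ then $x^*\in I-LIM^{r_1}x$ for every $r_1$ with $r<r_1$. So I would fix an arbitrary $x^*$ with $x_n\xrightarrow{r-I}x^*$ and an arbitrary $(0<<)\varepsilon\in E$, and aim to prove that the set $B(\varepsilon)=\left\{n\in\mathbb{N}:r_1+\varepsilon-d(x_n,x^*)\notin int P\right\}$ belongs to $I$.

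The key observation I would exploit is the decomposition $r_1+\varepsilon-d(x_n,x^*)=(r_1-r)+\big(r+\varepsilon-d(x_n,x^*)\big)$. Since $r<r_1$ means precisely $r_1-r\in P$, Theorem \ref{rahulda} (an element of $P$ added to an element of $int P$ stays in $int P$) shows that whenever $r+\varepsilon-d(x_n,x^*)\in int P$, the sum $r_1+\varepsilon-d(x_n,x^*)$ also lies in $int P$. Reading this contrapositively gives the inclusion $B(\varepsilon)\subseteq A(\varepsilon)=\left\{n\in\mathbb{N}:r+\varepsilon-d(x_n,x^*)\notin int P\right\}$.

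The argument then closes by combining the hypothesis with the ideal axioms. By the assumption that $x_n\xrightarrow{r-I}x^*$ we have $A(\varepsilon)\in I$, and since $B(\varepsilon)\subseteq A(\varepsilon)$, condition $(ii)$ in the definition of an ideal (closure under subsets) yields $B(\varepsilon)\in I$. As $\varepsilon\in int P$ was arbitrary, this is exactly the assertion that $\left\{x_n\right\}$ is rough $I$-convergent of roughness degree $r_1$ to $x^*$.

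I do not expect a serious obstacle; the only points needing care are the correct reading of the cone order, namely interpreting $r<r_1$ as $r_1-r\in P$ so that Theorem \ref{rahulda} applies, and --- should one wish to confirm that $r_1$ is itself an admissible roughness degree --- noting that $r_1=r+(r_1-r)\in int P$ by the same theorem when $0<<r$.
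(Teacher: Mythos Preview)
Your argument is correct and is exactly the natural route: use $r_1+\varepsilon-d(x_n,x^*)=(r_1-r)+\big(r+\varepsilon-d(x_n,x^*)\big)$ together with Theorem~\ref{rahulda} to obtain $B(\varepsilon)\subseteq A(\varepsilon)$, and then invoke the hereditary property of the ideal. The paper itself simply declares the proof ``trivial'' and omits it, so your write-up is in fact more detailed than what appears there; it is safe to assume this is precisely the computation the authors had in mind.
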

\begin{proof}
Proof is trivial and so is omitted.
\end{proof}
In view of the theorem \ref{th 5} we have the following corollary.
\begin{cor}
Let $x = \left\{x_n\right\}$ be a rough $I$-convergent sequence in $(X, d)$ of roughness degree $r$. Then for a $(0<<)r_1$ with $r< r_1$, \: $LIM^r x \subset LIM^ {r_1} x$.
\end{cor}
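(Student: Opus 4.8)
The plan is to prove the inclusion pointwise, mirroring the monotonicity argument behind Theorem~\ref{th 5} but carried out for ordinary rough convergence in place of rough $I$-convergence. Fix an arbitrary $x^* \in LIM^r x$; then $x_n \xrightarrow{r} x^*$, and it suffices to show $x_n \xrightarrow{r_1} x^*$, since this yields $x^* \in LIM^{r_1} x$ and hence the desired containment.

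First I would unwind the definition of rough convergence of degree $r$. Let $(0<<)\varepsilon \in E$ be arbitrary. Since $x_n \xrightarrow{r} x^*$, there is an $m \in \mathbb{N}$ with $d(x_n, x^*) << r + \varepsilon$ for all $n \geq m$; equivalently, $(r + \varepsilon) - d(x_n, x^*) \in int P$ for all $n \geq m$.

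The crux is then a single application of the cone addition property. Since $r < r_1$, the definition of the partial order gives $r_1 - r \in P$. For each $n \geq m$ I would rewrite
$$
(r_1 + \varepsilon) - d(x_n, x^*) = \big[(r + \varepsilon) - d(x_n, x^*)\big] + (r_1 - r),
$$
which displays the left-hand side as a sum of an element of $int P$ and an element of $P$. By Theorem~\ref{rahulda} this sum lies in $int P$, so $d(x_n, x^*) << r_1 + \varepsilon$ for all $n \geq m$. As $\varepsilon$ was arbitrary, $x_n \xrightarrow{r_1} x^*$, which completes the proof.

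I do not anticipate a genuine obstacle: the argument is a direct verification, and the only points requiring care are the passage from $r < r_1$ to $r_1 - r \in P$ (immediate from the convention that $x < y$ means $x \leq y$ and $x \neq y$) and the correct orientation of Theorem~\ref{rahulda}, used here as $P + int P \subseteq int P$. I would also note that the rough $I$-convergence hypothesis is not actually invoked in establishing this inclusion; it only guarantees the setting is non-vacuous, and the same containment in fact holds for an arbitrary sequence, the case $LIM^r x = \phi$ being trivial.
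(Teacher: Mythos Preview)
Your argument is correct and is precisely the monotonicity computation the paper has in mind: the paper gives no separate proof, merely citing Theorem~\ref{th 5} (whose own proof is declared trivial), and your decomposition $(r_1+\varepsilon)-d(x_n,x^*)=[(r+\varepsilon)-d(x_n,x^*)]+(r_1-r)\in int P + P\subset int P$ via Theorem~\ref{rahulda} is exactly that trivial step spelled out for ordinary rough convergence. Your closing remark that the rough $I$-convergence hypothesis plays no role in the inclusion is also accurate and worth noting.
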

\begin{defn} (cf. \cite{j} )
A point $c\in X$ is said to be a $I$-cluster point of a sequence $\left\{x_n\right\}$ in $(X, d)$ if for any $(0<<) \varepsilon$ the set $\left\{k\in \mathbb{N} :  \varepsilon - d(x_k, c) \in int P\right\}\notin I$.  
\end{defn}
For $0< < r$ and a fixed $y\in X$, the closed spheres $\overline{B_r(y)}$ and open spheres $B_r(y)$ centred at $y$ with radius $r$ is defined in \cite{5} as follows:\\
$\overline{B_r(y)}=\left\{x\in X : d(x, y) \leq r\right\}$ and $B_r(y)=\left\{x\in X : d(x, y) << r \right\}$.\\
Now we have the following theorems.
\begin{thm}\label{th 3.10}
Let $(X, d)$ be a cone metric space. $c\in X$ and $(0<<)r$ be such that for any $x\in X$ either $d(x, c) \leq r$ or $r<< d(x, c)$. If $c$ is a $I$-cluster point of a sequence $\left\{x_n\right\}$ then $I-LIM^r x \subset \overline{B_r(c)}$.
\end{thm}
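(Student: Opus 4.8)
The plan is to argue by contradiction. Suppose $x^* \in I-LIM^r x$ but $x^* \notin \overline{B_r(c)}$, so that $d(x^*, c) \le r$ fails; by the assumed dichotomy the only remaining alternative is $r \ll d(x^*, c)$, i.e. $u := d(x^*, c) - r \in int P$. The whole argument then consists in manufacturing a contradiction out of this interior element $u$.

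The idea is to produce a single index $n$ at which $x_n$ is both close to $x^*$ (since $x^*$ is a rough $I$-limit) and close to $c$ (since $c$ is an $I$-cluster point). Fix an $\varepsilon$ with $0 \ll \varepsilon$, to be calibrated below. Because $x^* \in I-LIM^r x$, the set $B = \{n \in \mathbb{N} : r + \varepsilon - d(x_n, x^*) \in int P\}$ lies in $F(I)$, so $B^\complement \in I$; because $c$ is an $I$-cluster point, the set $C = \{n \in \mathbb{N} : \varepsilon - d(x_n, c) \in int P\} \notin I$. Were $B \cap C$ empty, we would have $C \subseteq B^\complement \in I$ and hence $C \in I$ by the hereditary ideal axiom, a contradiction; so I may select some $n \in B \cap C$. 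This step, where the cluster hypothesis is used in the form ``$C \notin I$'' rather than merely ``$C \in F(I)$'', is the conceptual heart of the proof.

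For this $n$ we have $d(x_n, x^*) \ll r + \varepsilon$ and $d(x_n, c) \ll \varepsilon$. Adding these two $\ll$-inequalities (via the corollary $int P + int P \subseteq int P$) and then absorbing the triangle inequality $d(x^*, c) \le d(x_n, x^*) + d(x_n, c)$ through Theorem \ref{rahulda} ($P + int P \subseteq int P$) yields $r + 2\varepsilon - d(x^*, c) \in int P$. Now I choose $\varepsilon = \tfrac{1}{4}u$, which is in $int P$ by the scaling theorem; then $r + 2\varepsilon - d(x^*, c) = -\tfrac{1}{2}u$, and this cannot lie in $int P$, since otherwise $-\tfrac{1}{2}u$ and $\tfrac{1}{2}u$ would both lie in $P$, forcing $\tfrac{1}{2}u = 0$ by cone antisymmetry and contradicting $0 \notin int P$. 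I expect the main obstacle to be purely bookkeeping: keeping the $\le$ (closed-cone) and $\ll$ (interior) relations straight when combining them, always invoking Theorem \ref{rahulda} to add a $P$-element to an $int P$-element, and calibrating $\varepsilon$ against $u$ precisely so that $2\varepsilon - u \notin int P$.
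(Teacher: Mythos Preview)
Your proof is correct and follows essentially the same approach as the paper's: both argue by contradiction, use the dichotomy hypothesis to get $u = d(x^*,c) - r \in int\,P$, exploit the fact that the cluster-point set cannot lie in the ideal to find a common index, and then combine the two strict inequalities with the triangle inequality to reach a contradiction. The only cosmetic difference is that the paper takes $\varepsilon = u/2$, which yields $r + 2\varepsilon - d(x^*,c) = 0 \in int\,P$ directly, whereas your choice $\varepsilon = u/4$ requires the extra (but valid) antisymmetry step.
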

\begin{proof}
If possible assume that there exists a $y\in I-LIM^r x$ but $y \notin \overline{B_r(c)}$. Now by our assumption $r << d(y, c)$. Let $(0<<)\varepsilon_1 = d(y,c ) - r$ and hence $d(y, c)= r + \varepsilon _1$. Let $(0<<)\varepsilon = \frac{\varepsilon_1}{2}$. Then we have $ d(y , c) = r + 2\varepsilon$. Also $B_{r + \varepsilon}(y) \cap B_\varepsilon (c)=\phi$. For, if $l\in B_{r + \varepsilon}(y) \cap B_\varepsilon (c)$ then $d(l, y) << r +\varepsilon$ and $ d(l, c) <<\varepsilon$. Thus $ r + \varepsilon  -d (l,y )\in int P$ and $\varepsilon  - d(l, c) \in int P$. Therefore $( r + \varepsilon - d(l, y) ) + ( \varepsilon - d(l, c))= r + 2 \varepsilon - ( d(l, y) + d(l, c)) \in int P\rightarrow (i)$. Now as $d(y, c) \leq d(y, l) + d(l, c)$, therefore $d(y, l) + d(l, c) - d(y, c) \in P\rightarrow (ii)$. Hence from $(i)$ and $(ii)$ we get $ r + 2 \varepsilon - (d(l,y) + d(l, c)) + d(y,l) + d(l, c) -d(y, c)= r + 2\varepsilon - d(y, c)=0\in int P$, a contradiction. Hence $B_{r + \varepsilon}(y) \cap B_\varepsilon (c)=\phi$. As $y \in I-LIM^r x$, so the set $A=\left\{n\in \mathbb{N} : r + \varepsilon - d(x_n, y)\notin int P\right\}\in I$. So the set $A^\complement=\mathbb{N}\setminus A\in F(I)$. Again since $c$ is a $I$-cluster point of $\left\{x_n\right\}$ , so for $0<<\varepsilon$ the set $\left\{k\in \mathbb{N} :  \varepsilon - d(x_k, c)\in int P\right\}\notin I$. Therefore the set $\left\{k\in \mathbb{N} :  \varepsilon - d(x_k, c)\in int P\right\}$ can not be a subset of $A$. For, if $\left\{k\in \mathbb{N} :  \varepsilon - d(x_k, c)\in int P\right\} \subset A$ then we have $\left\{k\in \mathbb{N} :  \varepsilon - d(x_k, c)\in int P\right\}\in I$, which contradicts to the fact that $c$ is a $I$-cluster point of $\left\{x_n\right\}$. We consider an element $m\in A^\complement$. So $m\in \left\{k\in \mathbb{N} :  \varepsilon - d(x_k, c)\in int P\right\}$. Now $m\in A^\complement $ implies $r + \varepsilon - d(x_m, y) \in int P$. Hence $d(x_m, y) << r + \varepsilon$, which implies $x_m\in B_{r + \varepsilon}(y)$. Also $m\in \left\{k\in \mathbb{N} :  \varepsilon - d(x_k, c)\in int P\right\}$ implies $\varepsilon - d(x_m, c)\in int P$. Therefore $d(x_m, c) << \varepsilon$, which further implies that $x_m\in B_\varepsilon(c)$. Thus we see that $x_m\in B_{r + \varepsilon}(y)\cap B_\varepsilon(c)$, which is a contradiction. Hence we can conclude that our assumption is wrong and $y\in\overline{ B_r(c) }$.
\end{proof}
\begin{thm}
Let $x=\left\{x_n\right\}$ be a rough $I$-convergent sequence of roughness degree $r$ in a cone metric space $(X, d)$ and $\left\{y_n\right\}$ be a $I$-convergent sequence in $I-LIM^r x$ which is $I$-convergent to $y$. Then $y\in I-LIM^r x$.
\end{thm}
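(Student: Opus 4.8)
The plan is to verify directly that $y$ satisfies the defining condition for membership in $I-LIM^r x$: that for every $(0<<)\varepsilon\in E$ the set $\{n\in\mathbb{N}: r+\varepsilon-d(x_n,y)\notin int P\}$ belongs to $I$. The whole argument reduces to a single triangle-inequality estimate, obtained by splitting a prescribed $\varepsilon$ as $\frac{\varepsilon}{2}+\frac{\varepsilon}{2}$ and using one index of $\{y_n\}$ lying close to $y$ together with the rough $I$-limit property of that index.

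First I would fix an arbitrary $(0<<)\varepsilon\in E$. Since $\{y_n\}$ is $I$-convergent to $y$, the set $\{m\in\mathbb{N}: \frac{\varepsilon}{2}-d(y_m,y)\notin int P\}$ lies in $I$; because $I$ is nontrivial this set cannot be all of $\mathbb{N}$, so there is at least one index $m_0$ with $\frac{\varepsilon}{2}-d(y_{m_0},y)\in int P$, that is $d(y_{m_0},y)<<\frac{\varepsilon}{2}$. By hypothesis $y_{m_0}\in I-LIM^r x$, so applying the definition of rough $I$-convergence with $\frac{\varepsilon}{2}$ in place of $\varepsilon$, the set $B=\{n\in\mathbb{N}: r+\frac{\varepsilon}{2}-d(x_n,y_{m_0})\in int P\}$ belongs to the filter $F(I)$.

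Next, for each $n\in B$ I would combine the two interior memberships. Adding $r+\frac{\varepsilon}{2}-d(x_n,y_{m_0})\in int P$ and $\frac{\varepsilon}{2}-d(y_{m_0},y)\in int P$ gives, by the corollary that the sum of two interior points is interior, $r+\varepsilon-d(x_n,y_{m_0})-d(y_{m_0},y)\in int P$. The triangle inequality $d(x_n,y)\leq d(x_n,y_{m_0})+d(y_{m_0},y)$ yields $d(x_n,y_{m_0})+d(y_{m_0},y)-d(x_n,y)\in P$, and adding this element of $P$ to the preceding interior element, by Theorem \ref{rahulda}, cancels the $y_{m_0}$ terms and leaves $r+\varepsilon-d(x_n,y)\in int P$. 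Hence $B\subset\{n\in\mathbb{N}: r+\varepsilon-d(x_n,y)\in int P\}$.

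Finally, writing $C=\{n\in\mathbb{N}: r+\varepsilon-d(x_n,y)\in int P\}$, the inclusion $B\subset C$ gives $C^\complement\subset B^\complement$, and since $B\in F(I)$ we have $B^\complement\in I$, so the ideal property forces $C^\complement=\{n\in\mathbb{N}: r+\varepsilon-d(x_n,y)\notin int P\}\in I$. As $\varepsilon$ was arbitrary, this is exactly the assertion $y\in I-LIM^r x$. The only point requiring care is the extraction of the single good index $m_0$, which rests on the set where $\frac{\varepsilon}{2}-d(y_m,y)\in int P$ being nonempty; this holds because its complement lies in the nontrivial ideal $I$ and therefore cannot exhaust $\mathbb{N}$. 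Beyond ordering the cone-positivity facts correctly, I anticipate no serious obstacle.
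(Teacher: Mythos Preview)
Your proof is correct and follows essentially the same approach as the paper: fix $\varepsilon$, use the $I$-convergence of $\{y_n\}$ to extract a single index $m_0$ (the paper calls it $p$) with $d(y_{m_0},y)<<\frac{\varepsilon}{2}$, invoke $y_{m_0}\in I\text{-}LIM^r x$ to get a filter set $B$, and then combine via the triangle inequality and Theorem~\ref{rahulda} to conclude $\{n: r+\varepsilon-d(x_n,y)\notin int P\}\subset B^\complement\in I$. The only cosmetic difference is that you make explicit why such an $m_0$ exists (nontriviality of $I$), which the paper leaves implicit.
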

\begin{proof}
Let $(0<<)\varepsilon$ be given. Since the sequence $\left\{y_n\right\}$ is $I$-convergent to $y$, for $(0<<)\varepsilon$ the set $A=\left\{n\in \mathbb{N} : \frac{\varepsilon}{2} - d(y_n, y)\notin int P\right\}\in I$. So the set $A^\complement=\mathbb{N}\setminus A\in F(I)$. Choose a $p\in A^\complement$. Then $\frac{\varepsilon}{2} - d(y_p, y)\in int P$ and so $d(y_p, y)<<\frac{\varepsilon}{2}\rightarrow (i)$. Also since $\left\{y_n\right\}$ is a sequence in $I-LIM^r x$, let $y_p\in I-LIM^r$. Therefore the set $B=\left\{n\in \mathbb{N} : r + \frac{\varepsilon}{2} - d(x_n, y_p)\notin int P\right\}\in I$. Hence it's complement  $B^\complement=\mathbb{N}\setminus B \in F(I)$. Let us choose an element $ l\in B^\complement(\in F(I))$. Therefore $ r + \frac{\varepsilon}{2} - d(x_l,y_p)\in int P$ and so $d(x_l,y_p) << r + \frac{\varepsilon}{2}\rightarrow (ii)$. Also for all $n\in \mathbb{N}$ we have $d(x_n,y)\leq d(x_n, y_p) + d(y_p, y)$. So $d(x_n, y_p) + d(y_p, y) - d(x_n, y)\in P $, for all $n\in \mathbb{N}$. In particular $d(x_l, y_p) + d(y_p, y) - d(x_l, y)\in P \rightarrow(iii)$. Now by $(i)$ and $(ii)$ using the theorem \ref{rahulda} we get $(\frac{\varepsilon}{2} - d(y_p, y)) + (r + \frac{\varepsilon}{2} - d(x_l, y_p))= r + \varepsilon -(d(y_p, y) + d(x_l, y_p))\in int P\rightarrow(iv)$. Applying again the theorem \ref{rahulda} we get from $(iii)$ and $(iv)$,\: $(d(x_l, y_p) + d(y_p, y) - d(x_l, y)) + ( r + \varepsilon -(d(y_p, y) + d(x_l, y_p)))=r + \varepsilon - d(x_l, y)\in int P$. Now since $l$ is chosen arbitrarily from $B^\complement$, therefore the set $\left\{l\in \mathbb{N} : r + \varepsilon - d(x_l, y)\notin int P\right\}\subset B$ and so $\{l\in\mathbb{N} : r + \varepsilon - d(x_l, y)\notin int P\}\in I$. Hence $y\in I-LIM^r x$.
\end{proof}
\begin{thm}
If $\left\{x_n\right\}$ and $\left\{y_n\right\}$ are two sequence in a cone metric space $(X, d)$ such that for any $(0<<)\varepsilon$ the set $\left\{n\in \mathbb{N} : d(x_n, y_n)> \varepsilon\right\}\in I$. Then $\left\{x_n\right\}$ is rough $I$-convergent of roughness degree $r$ to $x$ if and only if $\left\{y_n\right\}$ is rough $I$-convergent of same roughness degree $r$ to $x$ . 
\end{thm}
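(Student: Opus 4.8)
The plan is to reduce to a single implication using symmetry and then run a triangle-inequality estimate inside the cone order. Because $d(x_n,y_n)=d(y_n,x_n)$, the hypothesis ``$\{n\in\mathbb{N}:d(x_n,y_n)>\varepsilon\}\in I$ for every $(0<<)\varepsilon$'' is symmetric in the two sequences, and so is the conclusion; hence it is enough to prove that $x_n\xrightarrow{r-I}x$ forces $y_n\xrightarrow{r-I}x$, the converse being identical after swapping $x_n$ and $y_n$. The whole argument rests on the triangle inequality $d(y_n,x)\le d(y_n,x_n)+d(x_n,x)$ and on Theorem \ref{rahulda}, which guarantees that the sum of an element of $P$ and an element of $int\,P$ stays in $int\,P$.

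Assume $x_n\xrightarrow{r-I}x$ and fix an arbitrary $\varepsilon\in E$ with $0<<\varepsilon$. First I would write $\varepsilon=\tfrac{\varepsilon}{2}+\tfrac{\varepsilon}{2}$ and prove the key set inclusion
\[
\{n: r+\varepsilon-d(y_n,x)\notin int\,P\}\subseteq\{n: r+\tfrac{\varepsilon}{2}-d(x_n,x)\notin int\,P\}\cup\{n: \tfrac{\varepsilon}{2}-d(x_n,y_n)\notin int\,P\}.
\]
This is done by contraposition: if $r+\tfrac{\varepsilon}{2}-d(x_n,x)\in int\,P$ and $\tfrac{\varepsilon}{2}-d(x_n,y_n)\in int\,P$, then, since $int\,P\subset P$, Theorem \ref{rahulda} gives that their sum $r+\varepsilon-\big(d(x_n,x)+d(x_n,y_n)\big)\in int\,P$; adding to it the element $d(y_n,x_n)+d(x_n,x)-d(y_n,x)\in P$ produced by the triangle inequality (and using $d(x_n,y_n)=d(y_n,x_n)$) and invoking Theorem \ref{rahulda} once more yields $r+\varepsilon-d(y_n,x)\in int\,P$, that is, $n$ lies outside the left-hand set. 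The first set on the right is in $I$ because $\{x_n\}$ is rough $I$-convergent of degree $r$ to $x$ (applied to $\tfrac{\varepsilon}{2}$), so everything reduces to showing that the second set, $\{n:\tfrac{\varepsilon}{2}-d(x_n,y_n)\notin int\,P\}$, also belongs to $I$.

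The step I expect to be the main obstacle is precisely this last reduction, namely deriving $\{n:\tfrac{\varepsilon}{2}-d(x_n,y_n)\notin int\,P\}\in I$ from the hypothesis, which only supplies $\{n:d(x_n,y_n)>\tfrac{\varepsilon}{2}\}\in I$. One inclusion is easy: if $d(x_n,y_n)>\tfrac{\varepsilon}{2}$ then $\tfrac{\varepsilon}{2}-d(x_n,y_n)\notin int\,P$, for otherwise $\tfrac{\varepsilon}{2}-d(x_n,y_n)$ would lie in $int\,P\cap(-P)\subseteq P\cap(-P)=\{0\}$, i.e.\ $d(x_n,y_n)=\tfrac{\varepsilon}{2}$, contradicting strictness. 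The difficulty is the opposite containment: in a general, merely partial, cone order a value $d(x_n,y_n)$ may be incomparable to $\tfrac{\varepsilon}{2}$, so that $\tfrac{\varepsilon}{2}-d(x_n,y_n)\notin int\,P$ without $d(x_n,y_n)>\tfrac{\varepsilon}{2}$, and the hypothesis set can be a proper subset of the set we actually need. I would try to close this gap by exploiting that the hypothesis is available for every positive $\varepsilon$ simultaneously—testing $d(x_n,y_n)$ against a family of interior elements, and, if necessary, calling on the Archimedean-type Lemma \ref{khani} or on additional structure such as normality of the cone—so that the indices incomparable to $\tfrac{\varepsilon}{2}$ are themselves absorbed into a hypothesis set taken at a slightly different scale. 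This is the point at which the argument must be handled with care, since the naive one-line order manipulation is insufficient.
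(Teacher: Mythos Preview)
Your overall strategy---reduce to one direction by symmetry, split $\varepsilon$ in half, invoke the triangle inequality, and add via Theorem~\ref{rahulda}---is exactly what the paper does. The paper works on the filter side (it picks $k$ in the intersection of the two complements and shows $r+\varepsilon-d(y_k,x)\in int\,P$), but this is only a cosmetic rephrasing of your union bound.

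One simplification you should make: in your contrapositive step you ask for $\tfrac{\varepsilon}{2}-d(x_n,y_n)\in int\,P$, but you only need it in $P$. Since $r+\tfrac{\varepsilon}{2}-d(x_n,x)$ is already in $int\,P$, Theorem~\ref{rahulda} puts the sum in $int\,P$ even when the second summand is merely in $P$. With this adjustment the second set in your union becomes $\{n:\tfrac{\varepsilon}{2}-d(x_n,y_n)\notin P\}=\{n:d(x_n,y_n)\not\le\tfrac{\varepsilon}{2}\}$, which is precisely the set the paper uses.

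As for the obstacle you flag: the paper does not resolve it. It simply writes that for $k$ outside $\{n:d(x_n,y_n)>\tfrac{\varepsilon}{2}\}$ one has $d(x_k,y_k)\le\tfrac{\varepsilon}{2}$, i.e.\ it tacitly reads ``$d(x_n,y_n)>\varepsilon$'' in the hypothesis as the \emph{negation} of $d(x_n,y_n)\le\varepsilon$ rather than as the strict cone order $\varepsilon<d(x_n,y_n)$. Under that reading the two sets coincide and your worry evaporates; no normality, no Archimedean lemma, no multi-scale argument is used. So your proof is the paper's proof once you (i) weaken $int\,P$ to $P$ on the $d(x_n,y_n)$ term, and (ii) adopt the paper's interpretation of ``$>$'' in the hypothesis. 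Your observation that, taken literally as the strict partial order, the hypothesis is too weak in a general cone is mathematically correct, but the paper's proof does not address it.
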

\begin{proof}
Let $\left\{x_n\right\}$ be rough $I$-convergent of roughness degree $r$ to $x$. Let $(0<<)\varepsilon$ be given. Then the set $\left\{n\in \mathbb{N} : r +\frac{ \varepsilon}{2} - d(x_n, x)\notin int P\right\}\in I\rightarrow(i)$. Also according to our assumption the set $\left\{n\in\mathbb{N} : d(x_n, y_n)> \frac{\varepsilon}{2}\right\}\in I\rightarrow(ii)$. Now complement of the sets in $(i)$ and $(ii)$ belong to $F(I)$ and hence their intersection belong to $F(I)$. Let us choose an element $k\in \mathbb{N}$ in that intersection. Therefore $r + \frac{\varepsilon}{2} - d(x_k, x) \in int P$ and $d(x_k, y_k)\leq \frac{\varepsilon}{2} \: \text{i.e.,}\:\frac{\varepsilon}{2} - d(x_k, y_k)\in P$. So, $(r + \frac{\varepsilon}{2} - d(x_k , x)) + (\frac{\varepsilon}{2} - d(x_k, y_k)) = r + \varepsilon - ( d(x_k, x) + d(x_k, y_k) )\in int P\rightarrow(iii)$. Also for all $n$ \: $d(y_n,x) \leq d(x_n, y_n) + d(x_n, x)$. That is $d(x_n, y_n) + d(x_n, x) - d(y_n,x) \in P$. In particular $d(x_k, y_k) + d(x_k, x) - d(y_k,x) \in P\rightarrow (iv)$. Hence from $(iii)$ and $(iv)$ we get $( r + \varepsilon - (d(x_k, x) + d(x_k, y_k))) + ( d(x_k, y_k) + d(x_k, x) - d(y_k,x) ) = r + \varepsilon -d(y_k, x)\in int P$. Therefore the set $\left\{n\in \mathbb{N} : r + \varepsilon - d(y_k, x) \notin int P\right\}\in I$, which implies that $\left\{y_n\right\}$ is rough $I$-convergent of roughness degree $r$ to $x$.\\
Converse part can be proved by interchanging the role of $\left\{x_n\right\}$ and $\left\{y_n\right\}$.
\end{proof}
\begin{thm}
Let $\mathcal{C}$ be the set of all $I$-cluster points of a sequence $\left\{x_n\right\}$. Also let $(0<<)r\in E$ be such that for any $x\in X$ and for each $c\in \mathcal{C}$ either $d(x, c)\leq r$ or $r<<d(x , c)$. Then $I-LIM^r x \subset \bigcap_{c\in \mathcal{C}}\overline{B_r(c)} \subset \left\{y\in X : \mathcal{C}\subset \overline{B_r(y)}\right\}$.
\end{thm}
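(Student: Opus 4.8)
The plan is to establish the two set inclusions separately. The first is an immediate consequence of Theorem \ref{th 3.10} applied at each cluster point, while the second rests only on the symmetry of the cone metric $d$.

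For the inclusion $I-LIM^r x \subset \bigcap_{c\in \mathcal{C}}\overline{B_r(c)}$, I would first note that the hypothesis guarantees, for each fixed $c\in\mathcal{C}$, that every $x\in X$ satisfies either $d(x,c)\leq r$ or $r<<d(x,c)$; this is exactly the dichotomy required in the statement of Theorem \ref{th 3.10}. Since each $c\in\mathcal{C}$ is an $I$-cluster point of $\left\{x_n\right\}$, Theorem \ref{th 3.10} applies for that $c$ and gives $I-LIM^r x \subset \overline{B_r(c)}$. As this holds for every $c\in\mathcal{C}$, intersecting over all such $c$ yields $I-LIM^r x \subset \bigcap_{c\in\mathcal{C}}\overline{B_r(c)}$.

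For the inclusion $\bigcap_{c\in \mathcal{C}}\overline{B_r(c)} \subset \left\{y\in X : \mathcal{C}\subset \overline{B_r(y)}\right\}$, I would take an arbitrary $y\in\bigcap_{c\in\mathcal{C}}\overline{B_r(c)}$, which by the definition of the closed sphere means $d(y,c)\leq r$ for every $c\in\mathcal{C}$. Using the symmetry of the cone metric, $d(c,y)=d(y,c)\leq r$, so $c\in\overline{B_r(y)}$ for each $c\in\mathcal{C}$; that is, $\mathcal{C}\subset\overline{B_r(y)}$, and hence $y\in\left\{y\in X:\mathcal{C}\subset\overline{B_r(y)}\right\}$. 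Chaining the two inclusions completes the argument.

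I do not expect a genuine obstacle here: the whole content of the first inclusion is already carried by Theorem \ref{th 3.10}, and the second is essentially a one-line symmetry argument. The only point that warrants care is verifying that the dichotomy hypothesis of the present theorem (stated uniformly for each $c\in\mathcal{C}$) matches the single-point hypothesis of Theorem \ref{th 3.10} verbatim, so that the earlier result may be invoked legitimately for every cluster point simultaneously.
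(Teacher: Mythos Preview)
Your proof is correct and follows essentially the same approach as the paper: the first inclusion is obtained by applying Theorem \ref{th 3.10} to each $c\in\mathcal{C}$, and the second by unwinding the definition of $\overline{B_r(c)}$ and using the symmetry of $d$. If anything, you are slightly more careful than the paper in explicitly verifying that the dichotomy hypothesis matches that of Theorem \ref{th 3.10} and in naming the symmetry of $d$, which the paper uses tacitly.
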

\begin{proof}
From the theorem \ref{th 3.10} we can say that $I-LIM^ r x \subset \bigcap_{c \in \mathcal{C}} \overline{B_r(c)}(\subset \overline{B_r(c)})$. To prove the part $\bigcap_{c\in \mathcal{C}}\overline{B_r(c)} \subset \left\{y\in X : \mathcal{C}\subset \overline{B_r(y)}\right\}$, let us take a $z\in \bigcap_{c\in \mathcal{C}}\overline{B_r(c)} $. So $z\in \overline{B_r(c)}$ for each $c\in \mathcal{C}$ and therefore $d(z, c)\leq r$ for every $c\in \mathcal{C}$. This implies that $c\in \overline{B_r(z)}$ for each $c\in \mathcal{C}$. Thus we get $\mathcal{C} \subset \overline{B_r(z)}$. Hence $\bigcap_{c\in \mathcal{C}}\overline{B_r(c)} \subset \left\{y\in X : \mathcal{C}\subset \overline{B_r(y)}\right\}$. Hence the results follows.  
\end{proof}
\begin{defn}
A sequence $\left\{x_n\right\}$ in a cone metric space $(X, d)$ is said to be rough $I^*$-convergent of roughness degree $r$ to $x$ if there exists a set $M=\left\{m_1< m_2< \cdots< m_k < \cdots\right\}\in F(I)$ such that the subsequence $\left\{x_n\right\}_{n\in M}$ is rough convergent of roughness degree $r$ to $x$ for some $(0<<)r\in E$ or $r=0$. That is for any $\varepsilon$ with $(0<<)\varepsilon$ there exists a $k\in \mathbb{N}$ such that $d(x_{m_p}, x)<< r + \varepsilon $ for all $p\geq k$. Here $x$ is called the rough $I^*$-limit of the sequence $\left\{x_n\right\}$.\\
We denote this by $x_n\xrightarrow{ r - I^*} x$.
\end{defn}
\begin{note}
For $r=0$ we have the ordinary $I^*$-convergence of sequences in a cone metric space. Clearly the rough $I^*$-limit of a sequence in general not unique. We shall denote the set of all rough $I^*$-limits of a sequence $\left\{x_n\right\}$ by $I^*-LIM^r =\left\{x\in X : x_n \xrightarrow { r - I^*} x\right\}$ of roughness degree $r$.
\end{note}
\begin{thm}\label{new}
 If a sequence $x=\left\{x_n\right\}$ is rough $I^*$-convergent of roughness degree $r$ to $x$ then it is also rough $I$-convergent of same roughness degree $r$ to $x$.
\end{thm}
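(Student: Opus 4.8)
The plan is to exploit the containment between the ``bad'' index set demanded by rough $I$-convergence and the complement of the rough-convergence set $M$ together with a finite initial segment of $M$. First I would fix an arbitrary $\varepsilon\in E$ with $0<<\varepsilon$ and invoke the hypothesis: there is a set $M=\left\{m_1<m_2<\cdots\right\}\in F(I)$ and a $k\in\mathbb{N}$ such that $d(x_{m_p},x)<<r+\varepsilon$, that is $r+\varepsilon-d(x_{m_p},x)\in int P$, for all $p\geq k$. Since $M\in F(I)$, by the very definition of the filter associated with $I$ its complement $M^\complement=\mathbb{N}\setminus M$ lies in $I$.

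Next I would examine the set whose membership in $I$ is exactly what rough $I$-convergence requires, namely $A(\varepsilon)=\left\{n\in\mathbb{N}:r+\varepsilon-d(x_n,x)\notin int P\right\}$. The key observation is that every index $m_p$ with $p\geq k$ satisfies the convergence condition and therefore lies outside $A(\varepsilon)$; hence the only elements of $M$ that can possibly belong to $A(\varepsilon)$ are among the finitely many $m_1,m_2,\ldots,m_{k-1}$. This gives the inclusion
\[
A(\varepsilon)\subseteq M^\complement\cup\left\{m_1,m_2,\ldots,m_{k-1}\right\}.
\]

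Finally I would complete the ideal-theoretic argument. The set $M^\complement$ already lies in $I$, and the finite set $\left\{m_1,\ldots,m_{k-1}\right\}$ lies in $I$ because $I$ is admissible, so each singleton, and hence every finite subset of $\mathbb{N}$, belongs to $I$. As an ideal is closed under finite unions, the right-hand side of the inclusion is in $I$, and by the hereditary property of an ideal the subset $A(\varepsilon)$ is in $I$ as well. Since $\varepsilon$ was arbitrary, this is precisely the assertion that $\left\{x_n\right\}$ is rough $I$-convergent of roughness degree $r$ to $x$, completing the proof.

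The only genuinely delicate point I would emphasize is the appeal to admissibility that secures $\left\{m_1,\ldots,m_{k-1}\right\}\in I$: without admissibility a finite initial segment of $M$ need not belong to the ideal, and the displayed inclusion would then fail to force $A(\varepsilon)\in I$. Everything else is a routine application of the filter--ideal duality $M\in F(I)\Leftrightarrow M^\complement\in I$ together with the defining closure properties of an ideal.
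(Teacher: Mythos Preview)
Your proof is correct and follows essentially the same route as the paper's: both establish the inclusion $A(\varepsilon)\subseteq(\mathbb{N}\setminus M)\cup\{m_1,\ldots,m_{k-1}\}$ and then use $\mathbb{N}\setminus M\in I$ together with admissibility to conclude. Your write-up is in fact more explicit than the paper's about why the finite initial segment lies in $I$ and why the inclusion forces $A(\varepsilon)\in I$.
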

\begin{proof}
Let us assume that the sequence $\left\{x_n\right\}$ is rough $I^*$-convergent of roughness degree $r$ to $x$. Therefore by the definition, there exists a set $M=\left\{m_1< m_2< \cdots< m_k < \cdots\right\}\in F(I)$ such that $\left\{x_n\right\}_{n\in M}$ is rough convergent of roughness degree $r$ to $x$. That is for any $(0<<) \varepsilon$ there exists a $p\in \mathbb{N}$ such that $d(x_{m_k}, x) << r + \varepsilon$ for all $k\geq p$. Now the set $\left\{n\in \mathbb{N} : r + \varepsilon - d(x_n, x) \notin int P\right\} \subset \mathbb{N}\setminus M\cup \{m_1,m_2,\cdots, m_{p -1}\}$. As $\mathbb{N}\setminus M \cup \{m_1,m_2,\cdots, m_{p-1}\}\in I$ therefore the set $\left\{n\in \mathbb{N} : r + \varepsilon - d(x_n , x)\notin int P\right\} \in I$. Hence the sequence $\left\{x_n\right\}$ is rough $I$-convergent of roughness degree $r$ to $x$. This proves our theorem.
\end{proof}
It may happen that a sequence $\{x_n\}$ in a cone metric space $(X, d)$ is rough $I$-convergent of roughness degree $r$ to $x\in X$ without being rough $I^*$-convergent of same roughness degree $r$ to $x$. Following example is such one in support of our claim.
\begin{exmp}\label{rem}
Let $\mathbb{N}=\displaystyle{\bigcup_{j=1}^{\infty}}D_j$ be a decomposition of $\mathbb{N}$ such that $D_j=\left\{2^{j -1}(2s - 1) : s= 1, 2, \cdots\right\}$. Then each $D_j$ is infinite and $D_i \cap D_j= \phi$ for $i\neq j$. Put $\mathcal{I}$ be the class of all $A\subset \mathbb{N}$ such that $A$ intersects with only a finite numbers of $D_j$'s. Then it is easy to see that $\mathcal{I}$ is an admissible ideal in $\mathbb{N}$. Let $X=\mathbb{R}$, $E=\mathbb{R}^2$ and $P=\left\{(x, y) : x, y\geq 0\right\}\subset \mathbb{R}^2$ be a cone. Define $d : X \times X \rightarrow E$ be such that $d(x, y) = (|x - y|, |x - y|)$. Then $(X, d)$ is a cone metric space. Define a sequence $x=\left\{x_n\right\}$ in $(X,d)$ such that $x_n = \frac{1}{j}$ if $n\in D_j$. Let $r=(r_1, r_2)\in int P$ and $\min(r_1, r_2)=r^*$. Let $(0<<)\varepsilon= (\varepsilon_1, \varepsilon_2)$ be arbitrary and $\min(\varepsilon_1, \varepsilon_2)=\varepsilon^*$. Then by Archimedean property of $\mathbb{R}$, there exists a $l\in\mathbb{N}$ such that $\varepsilon^* >\frac{1}{l}$. Then it is easy to see that $ [-r^*, r^*] \subset I-LIM^r x$, as $\{n\in\mathbb{N} : r + \varepsilon - d(x_n, x^*)\notin int P\}\subset D_1\cup D_2 \cup \cdots \cup D_l$ for any $x^*\in [-r^*, r^*]$. Therefore the sequence defined above is rough $I$-convergent.\\

If possible let this sequence be rough $I^*$-convergent  of roughness degree $r= (r_1, r_2)$ to $x^*=\frac{r^*}{2}$, where $r^*=\min (r_1, r_2)$. Then there exists a set $M=\left\{m_1<m_2<\cdots<m_k<\cdots\right\}\in F(I)$ such that $\left\{x_{m_k}\right\}$ is rough convergent of roughness degree $r$. Now obviously $\mathbb{N}\setminus M = H\in\mathcal{I}$. So there exists a $p\in \mathbb{N}$ such that $H\subset D_1\cup D_2\cup\cdots\cup D_p$ and $D_{p+1}\subset \mathbb{N}\setminus H = M$. Hence we have $x_{m_k}=\frac{1}{p + 1}$ for infinitely many $k$'s. Let us take  $(0<<)r\in E$ in such a way that $r^*=\frac{1}{3(p+1)}$. Let $(0<<)\varepsilon=(\varepsilon_1, \varepsilon_2)\in E$ be chosen such that $(\varepsilon_1, \varepsilon_2)= (\frac{1}{10(p+1)}, \frac{1}{10(p +1)})$. Then $r + \varepsilon - d(x_{m_k}, x^*)\notin int P$ for infinitely many $k$'s. Therefore the sequence is not rough $I^*$-convergent to $x^*=\frac{r^*}{2}$ for this chosen $r$ although the sequence is rough $I$-convergent to $x^*=\frac{r^*}{2}$ for the same $r$. 
\end{exmp}
\begin{rem}
However the sequence in the example \ref{rem} may be rough $I^*$-convergent of different roughness degree to different limit with respect to the same ideal defined in that example. For, if we chose say $r=(5,5)$ and $l=\frac{1}{p + 1}$, then $d(x_{m_k}, l) << r +\varepsilon $ for all $k$'s. Therefore $\{x_n\}_{n\in M}$ is rough convergent and so $\{x_n\}$ is rough $I^*$-convergent.
\end{rem}
Rough $I$-limit and rough $I^*$-limit are same for a sequence $\{x_n\}$ in a cone metric space if the ideal has the property (AP). To prove this we need the following lemma.
\begin{lem}\label{lem}\cite{m}
Let $\left\{A_n\right\}_{n\in\mathbb{N}}$ be a countable family of subsets of $\mathbb{N}$ such that $A_n\in F(I)$ for each $n$, where $F(I)$ is the filter associated with an admissible ideal $I$ with the property (AP). Then there exists a set $B\subset \mathbb{N}$ such that $B\in F(I)$ and the sets $B\setminus A_n$ is finite for all $n$.
\end{lem}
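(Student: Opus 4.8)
The plan is to dualize to the ideal, disjointify the resulting sets so that the (AP) condition becomes applicable, invoke (AP), and then pass back to the filter by complementation. First I would note that, since each $A_n \in F(I)$, its complement $C_n := \mathbb{N} \setminus A_n$ lies in $I$. The obstruction to applying (AP) directly is that the family $\{C_n\}$ need not be pairwise disjoint; this is the crux, and I would resolve it by the standard disjointification $D_1 := C_1$ and $D_n := C_n \setminus (C_1 \cup \cdots \cup C_{n-1})$ for $n \geq 2$. Then the $D_n$ are mutually disjoint, each $D_n \subseteq C_n$ lies in $I$ by the hereditary property of ideals, and one has the key identity $C_1 \cup \cdots \cup C_n = D_1 \cup \cdots \cup D_n$; in particular $C_n \subseteq D_1 \cup \cdots \cup D_n$.

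Next I would apply the (AP) condition to the mutually disjoint family $\{D_n\} \subseteq I$ to obtain sets $\{B_n\}$ with each symmetric difference $D_n \Delta B_n$ finite and $B^* := \bigcup_{j \in \mathbb{N}} B_j \in I$. The set I would then propose for the lemma is $B := \mathbb{N} \setminus B^*$, which lies in $F(I)$ precisely because $B^* \in I$.

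It remains to check that $B \setminus A_n$ is finite for every $n$. Here I would observe that $B \setminus A_n = B \cap C_n = C_n \setminus B^*$, and then use the inclusion $C_n \subseteq D_1 \cup \cdots \cup D_n$ from the first step to write $C_n \setminus B^* \subseteq \bigcup_{j=1}^{n} (D_j \setminus B^*) \subseteq \bigcup_{j=1}^{n} (D_j \setminus B_j)$, the last inclusion using $B_j \subseteq B^*$. Since $D_j \setminus B_j \subseteq D_j \Delta B_j$ is finite for each $j$, the right-hand side is a finite union of finite sets and hence finite. The main obstacle throughout is the disjointification together with the bookkeeping inclusion $C_n \subseteq D_1 \cup \cdots \cup D_n$: this is exactly what allows a single set $B$ to work simultaneously for the entire countable family, rather than just for finitely many indices.
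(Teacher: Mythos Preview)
Your argument is correct and is precisely the standard proof of this lemma: pass to complements, disjointify, apply (AP), and complement back, using the inclusion $C_n\subseteq D_1\cup\cdots\cup D_n$ to control $B\setminus A_n$ by a finite union of the finite sets $D_j\setminus B_j$. Note, however, that the paper does not actually prove this lemma---it is quoted without proof from \cite{m}---so there is no in-paper argument to compare against; your proof is exactly the one given in that reference.
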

\begin{thm}\label{newone}
If an ideal $I$ has the property (AP) then a sequence $x=\left\{x_n\right\}$ in a cone metric space $(X, d)$ which is rough $I$-convergent of roughness degree $r$ to $x^*\in X$ is also rough $I^*$-convergent of same roughness degree $r$ to $x^*$.
\end{thm}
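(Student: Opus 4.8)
The plan is to mimic the classical argument that $I$-convergence together with (AP) yields $I^*$-convergence, with the metric inequalities replaced by the cone order $<<$ and the null sequence $1/j$ replaced by a scaled reference interior point that is controlled through the Archimedean Lemma \ref{khani}.

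First I would fix once and for all some $c\in E$ with $0<<c$ and, for each $j\in\mathbb{N}$, set $\varepsilon_j=\frac{1}{j}c$. Since a positive scalar multiple of an interior point is again interior (the first theorem quoted from \cite{5}), each $\varepsilon_j$ satisfies $0<<\varepsilon_j$. Applying the hypothesis of rough $I$-convergence to $\varepsilon_j$, the set $A_j=\{n\in\mathbb{N}: r+\varepsilon_j-d(x_n,x^*)\notin int P\}$ lies in $I$, so its complement $A_j^\complement=\{n\in\mathbb{N}: r+\varepsilon_j-d(x_n,x^*)\in int P\}$ lies in $F(I)$. Next I would invoke (AP) through Lemma \ref{lem}: applied to the countable family $\{A_j^\complement\}_{j\in\mathbb{N}}\subset F(I)$ it produces a single set $B\in F(I)$ with $B\setminus A_j^\complement$ finite for every $j$. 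Writing $B=\{m_1<m_2<\cdots\}$, this $B$ is the candidate witnessing set for rough $I^*$-convergence, and it remains only to show that the subsequence $\{x_n\}_{n\in B}$ is rough convergent of degree $r$ to $x^*$.

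For the final step, let $(0<<)\varepsilon$ be arbitrary. The crucial move is to dominate $\varepsilon$ by one of the $\varepsilon_j$: by Lemma \ref{khani} applied to $c\in P$ and $\varepsilon\in int P$ there is a $j$ with $c<<j\varepsilon$, and dividing by the positive scalar $j$ gives $\varepsilon_j=\frac{1}{j}c<<\varepsilon$. For this fixed $j$, all but finitely many $n\in B$ belong to $A_j^\complement$, i.e. satisfy $d(x_n,x^*)<<r+\varepsilon_j$; combining this with $r+\varepsilon_j<<r+\varepsilon$ (which is merely $\varepsilon-\varepsilon_j\in int P$) and transitivity of $<<$ — itself a consequence of Theorem \ref{rahulda} — yields $d(x_n,x^*)<<r+\varepsilon$. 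Since only finitely many indices of $B$ are excluded, there is $k$ with $d(x_{m_p},x^*)<<r+\varepsilon$ for all $p\geq k$, which is exactly rough convergence of $\{x_n\}_{n\in B}$ of degree $r$ to $x^*$; as $B\in F(I)$, this establishes rough $I^*$-convergence of degree $r$ to $x^*$.

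The main obstacle I anticipate is precisely this last matching of an arbitrary interior $\varepsilon$ with a member of the pre-chosen family $\{\varepsilon_j\}$. Unlike in $\mathbb{R}$, where $1/j\to 0$ automatically beats every $\varepsilon$, the cone carries no canonical decreasing null sequence, so the argument genuinely rests on Lemma \ref{khani} to guarantee some $\varepsilon_j<<\varepsilon$. Everything else reduces to order arithmetic — transitivity of $<<$ and positivity of scalar multiples — drawn from the preliminary results of \cite{5}.
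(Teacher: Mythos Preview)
Your proof is correct and follows essentially the same route as the paper: fix a reference interior point (your $c$, the paper's $l$), use rough $I$-convergence along the scaled family $\frac{1}{j}c$ to obtain filter sets, extract a single $B\in F(I)$ via Lemma~\ref{lem}, and then match an arbitrary $(0<<)\varepsilon$ to some $\frac{1}{j}c$ through Lemma~\ref{khani}. The only cosmetic difference is that the paper writes the filter sets directly as $A_i=\{n: d(x_n,x^*)<<r+\tfrac{l}{i}\}$ rather than as complements of ideal sets.
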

\begin{proof}
Let $I$ be a ideal in $\mathbb{N}$ which satisfy the property (AP). Let the sequence $x=\left\{x_n\right\}$ be rough $I$-convergent of the roughness degree $r$ to $x^*$. Then for any $(0<<) \varepsilon $ the set $\left\{n\in \mathbb{N} : r + \varepsilon -d(x_n , x^*)\notin int P\right\}\in I$. Therefore the set $\left\{n\in \mathbb{N} : r + \varepsilon - d(x_n, x^*)\in int P\right\}\in F(I)$. Let $(0<<)l\in E$. Now define $A_i=\left\{n\in \mathbb{N} : d(x_n, x^*) < < r + \frac{l}{i}\right\}$, where $i=1, 2,  \cdots$. It is clear that $A_i\in F(I)$ for all $i=1, 2, \cdots$. Since $I$ has the property (AP), therefore there exists a set $B\subset \mathbb{N}$ such that $B\in F(I)$ and $B\setminus A_i$ is finite for $i=1, 2, \cdots$. 
 Now let $(0<<)\varepsilon\in E$, then by lemma \ref{khani} there exists $j\in \mathbb{N}$ such that $\frac{l}{j}<< \varepsilon$. As $B\setminus A_j $ is finite, so there exists a $k=k(j)\in \mathbb{N}$ such that $n\in B\cap A_j$ for all $n\geq k$. Therefore $d(x_n, x^*)<< r +\frac{l}{j} << r + \varepsilon$ for all $n\in B$ and $n\geq k$. Thus the sequence $\left\{x_n\right\}_{n\in B}$ is rough convergent of roughness degree $r$ to $x^*$. Hence the sequence $\left\{x_n\right\}$ is rough $I^*$-convergent of roughness degree $r$ to $x^*$. Hence the theorem. 
\end{proof}
\begin{cor}
Let $\{x_n\}$ be a sequence in a cone metric space $(X,d)$. Then rough $I$-limit set of $\{x_n\}$ equals with rough $I^*$-limit set of $\{x_n\}$ of  roughness degree $r$ if and only if $I$ has the property (AP).
\end{cor}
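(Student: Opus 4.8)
The plan is to split the biconditional into its two implications, reading sufficiency of (AP) straight off the two preceding theorems and reserving the real work for necessity. For \emph{sufficiency}, suppose $I$ has (AP). Theorem \ref{new} shows that rough $I^*$-convergence always entails rough $I$-convergence to the same limit of the same roughness degree, so the inclusion $I^*-LIM^r x \subseteq I-LIM^r x$ holds for every sequence with no hypothesis on $I$. Theorem \ref{newone} supplies the reverse inclusion $I-LIM^r x \subseteq I^*-LIM^r x$ exactly under (AP). Together these give $I-LIM^r x = I^*-LIM^r x$ for every sequence $\left\{x_n\right\}$.

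For \emph{necessity} I would argue by contraposition: assuming $I$ fails (AP), I exhibit a sequence whose two limit sets differ. Since $I^*-LIM^r x \subseteq I-LIM^r x$ is automatic, it suffices to produce a point of $I-LIM^r x$ not lying in $I^*-LIM^r x$. Because (AP) fails, fix a family $\left\{A_j\right\}$ of mutually disjoint sets in $I$ admitting no family $\left\{B_j\right\}$ with every $A_j \Delta B_j$ finite and $\bigcup_j B_j \in I$. Working in the cone metric space $X=\mathbb{R}$, $E=\mathbb{R}^2$, $P=\left\{(x,y):x,y\geq 0\right\}$, $d(x,y)=(|x-y|,|x-y|)$ of Example \ref{rem}, fix any $x^*\in X$, set $r^*=\min(r_1,r_2)$, and define $x_n = x^*+r^*+\tfrac1j$ for $n\in A_j$ and $x_n=x^*$ for any remaining indices. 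Then $d(x_n,x^*)=(r^*+\tfrac1j,\,r^*+\tfrac1j)$ on $A_j$, so for any $(0<<)\varepsilon$ only finitely many $j$ fail $r+\varepsilon-d(x_n,x^*)\in int P$; hence $\left\{n : r+\varepsilon-d(x_n,x^*)\notin int P\right\}$ is a finite union of the sets $A_j$ and lies in $I$, giving $x^*\in I-LIM^r x$.

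The crux is ruling out $x^*\in I^*-LIM^r x$, and here I would first record the structural fact that no $M\in F(I)$ can meet every $A_j$ in a finite set: if $A_j\cap M$ were finite for all $j$, then $B_j=A_j\setminus M$ would satisfy $A_j\Delta B_j=A_j\cap M$ finite while $\bigcup_j B_j\subseteq \mathbb{N}\setminus M\in I$, contradicting the failure of (AP). Thus every $M\in F(I)$ has an index $j_0$ with $A_{j_0}\cap M$ infinite. If the sequence were rough $I^*$-convergent to $x^*$ along such an $M$, then choosing $\varepsilon=(\varepsilon_0,\varepsilon_0)$ with $\varepsilon_0<\tfrac{1}{j_0}$ makes the coordinate of $r+\varepsilon-d(x_n,x^*)$ attaining $\min(r_1,r_2)$ equal to $\varepsilon_0-\tfrac{1}{j_0}<0$ for every $n\in A_{j_0}$, so $r+\varepsilon-d(x_n,x^*)\notin int P$ on the infinite set $A_{j_0}\cap M$, contradicting the defining eventual condition. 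Hence $x^*\notin I^*-LIM^r x$ and the two sets differ. The main obstacle is precisely this balancing act: one must calibrate the values $x_n$ so that their distances to $x^*$ decrease to $r^*$ strictly from above (keeping $x^*$ an $I$-rough-limit) yet stay bounded away from $r^*$ on each single block, so that any co-$I$ tail—which by the structural fact cannot escape all blocks—is barred from rough-converging to $x^*$; this is the same mechanism already at work in Example \ref{rem}.
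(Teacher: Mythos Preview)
Your sufficiency argument is exactly the paper's: the paper's entire proof reads ``In view of theorem \ref{new} and theorem \ref{newone} the result follows,'' which is precisely the pair of inclusions you invoke.

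Your necessity argument, however, goes well beyond what the paper does. The paper's proof cites only Theorems \ref{new} and \ref{newone}, and these establish \emph{only} the implication ``(AP) $\Rightarrow$ equality of the two limit sets''; the converse direction is simply not addressed there. You supply a genuine construction for it: witnessing the failure of (AP) by a disjoint family $\{A_j\}$, building a sequence constant at level $r^*+\tfrac1j$ on each $A_j$, and then using the key combinatorial observation that any $M\in F(I)$ must meet some $A_{j_0}$ infinitely (else the sets $B_j=A_j\setminus M$ would repair (AP)). This is correct and is essentially the device of Example \ref{rem} turned into a general argument.

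Two remarks are worth making explicit. First, the corollary as literally phrased fixes a single sequence in a single space, and under that reading the necessity direction is false (a constant sequence has equal limit sets regardless of (AP)); you have, sensibly, interpreted it as a statement about all sequences and produced a counterexample in the concrete space $X=\mathbb{R}$, $E=\mathbb{R}^2$ of Example \ref{rem}, which is the only reading under which the biconditional can hold. Second, your construction lives in that particular cone metric space, so strictly it proves: ``if (AP) fails then there exist $(X,d)$, $\{x_n\}$ and $r$ with $I\text{-}LIM^r x\neq I^*\text{-}LIM^r x$'', which is the appropriate converse. In short, your sufficiency matches the paper, and your necessity argument is a correct addition that the paper's one-line proof omits.
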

\begin{proof}
In view of theorem \ref{new} and theorem \ref{newone} the result follows.
\end{proof}
\begin{thm}
If $y=\left\{x_{n_k}\right\}$ be a subsequence of the sequence $x=\left\{x_n\right\}$, then $I-LIM^r x \subset I-LIM^r y$.
\end{thm}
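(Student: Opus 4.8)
The plan is to reduce the whole statement to the hereditary (downward-closed) property of the ideal, namely condition $(ii)$ in the definition of an ideal: if $B\subseteq A$ and $A\in I$, then $B\in I$. First I would fix an arbitrary $x^*\in I-LIM^r x$. By the definition of rough $I$-convergence, for every $(0<<)\varepsilon\in E$ the set
\[
A(\varepsilon)=\left\{n\in\mathbb{N}: r+\varepsilon-d(x_n,x^*)\notin int P\right\}\in I .
\]
The goal is then to show that the same $x^*$ lies in $I-LIM^r y$, where $y=\left\{x_{n_k}\right\}$; that is, for this same arbitrary $\varepsilon$ the corresponding ``bad set'' for the subsequence must be shown to belong to $I$.

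Next I would make the index set of the subsequence explicit. Writing $N=\left\{n_1<n_2<\cdots\right\}$ for the index set carrying the subsequence $y$, the relevant bad set is
\[
B(\varepsilon)=\left\{n_k\in N: r+\varepsilon-d(x_{n_k},x^*)\notin int P\right\}.
\]
Since each term $x_{n_k}$ of the subsequence is literally a term of the original sequence, an index $n_k$ violates the rough condition for $y$ exactly when it violates it for $x$; hence $B(\varepsilon)=A(\varepsilon)\cap N\subseteq A(\varepsilon)$. As $A(\varepsilon)\in I$, property $(ii)$ of the ideal at once yields $B(\varepsilon)\in I$. Because $\varepsilon$ was arbitrary, $x^*\in I-LIM^r y$, and since $x^*$ was an arbitrary element of $I-LIM^r x$, the inclusion $I-LIM^r x\subset I-LIM^r y$ follows.

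The step I expect to carry the genuine weight is the bookkeeping in the second paragraph: one must be careful about what the index set of the subsequence is and against which ideal its rough $I$-convergence is measured. Under the natural reading adopted here, where the subsequence retains its original indices $N$ and the containment $B(\varepsilon)\subseteq A(\varepsilon)$ is transparent, the hereditariness of $I$ finishes the argument immediately, so the computational content is essentially nil. The only subtle point I would flag explicitly is this indexing convention, since if one instead re-indexes the subsequence by $k\in\mathbb{N}$ and tests convergence against the full ideal $I$ on the new index set, the inclusion $\{k:n_k\in A(\varepsilon)\}\subseteq A(\varepsilon)$ need not hold for sparsely chosen subsequences; thus the theorem is best stated and proved with the subsequence viewed through its original indices, where downward closure of $I$ is exactly what is needed.
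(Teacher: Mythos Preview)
Your argument is correct and matches the paper's opening move exactly: both fix $x^*\in I\text{-}LIM^r x$, observe that the subsequence's bad set $\{n_k: r+\varepsilon-d(x_{n_k},x^*)\notin int\,P\}$ is contained in the full sequence's bad set $A(\varepsilon)\in I$, and invoke the hereditary property of $I$ to place the smaller set in $I$. You then stop, which under the original-index convention you explicitly adopt is already the definition of $x^*\in I\text{-}LIM^r y$. The paper, by contrast, does not stop there: it passes to the complement $L=\{n_k: r+\varepsilon-d(x_{n_k},x^*)\in int\,P\}$, claims $L\in F(I)$, argues that $\{x_{m_k}\}_{m_k\in L}$ is rough convergent to $x^*$, concludes that $y$ is rough $I^*$-convergent to $x^*$, and finally invokes the earlier theorem that rough $I^*$-convergence implies rough $I$-convergence. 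This detour buys nothing and is in fact shaky on two counts: the set $L$ depends on the particular $\varepsilon$ (so a single $L$ cannot witness rough convergence for \emph{all} $\varepsilon$ as $I^*$-convergence demands), and $L\subseteq N=\{n_k\}$ so the claim $L\in F(I)$ tacitly needs $\mathbb{N}\setminus N\in I$, which is nowhere assumed. Your direct route sidesteps both issues, and your explicit discussion of the indexing convention is precisely the care this statement requires.
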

\begin{proof}
If possible let $x^*\in I-LIM^r x$. Then for any $(0<< )\varepsilon \in E$ the set $\{ n\in \mathbb{N} : r +\varepsilon - d(x_n , x^*) \notin int P\}\in I$. Now for the subsequence $y= \{x_{n_k}\}$, as $\{n_k\in \mathbb{N} : r +\varepsilon - d(x_{n_k}, x^*)\notin int P\} \subset \{n\in \mathbb{N} : r +\varepsilon - d(x_n , x^*)\notin int P\}$ and $\{n\in \mathbb{N} : r +\varepsilon - d(x_n , x^*)\notin int P\}\in I$, so $\{n_k\in \mathbb{N} : r +\varepsilon - d(x_{n_k}, x^*)\notin int P\}\in I$. Hence the set $L = \{n_k\in \mathbb{N} : r +\varepsilon - d(x_{n_k}, x^*)\in int P\}\in F(I)$. Let us write $L=\{m_1< m_2<m_3 \cdots \}$. Then $\{x_{m_k}\}_{m_k \in L}$ is a subsequence of $y$. So for the sequence $\{x_{m_k}\}_{{m_k}\in L}$ we have $d(x_{m_k}, x^*) << r +\varepsilon $ and hence $\{x_{m_k}\}_{m_k\in L}$ is rough convergent of roughness degree $r$ to $x^*$. Therefore the sequence $y=\{x_{n_k}\}$ is rough $I^*$-convergence of roughness degree $r$ to $x^*$. So $y=\{x_{n_k}\}$ is also rough $I$-convergent of roughness degree $r$ to $x^*$. Hence $x^*\in I-LIM ^r y$. 
\end{proof}
We now recall following two lemmas from \cite{5}.
\begin{lem}\label{lem1}\cite{5}
Let $(X, d)$ be a cone metric space with normal cone $P$ and normal constant $K$. Then for any $\varepsilon(>0)\in \mathbb{R}$, we can choose $c\in E$ with $c\in int P$ and $K|| c ||< \varepsilon$.
\end{lem}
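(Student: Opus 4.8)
The plan is to produce the required element $c$ explicitly, by rescaling an arbitrary interior point of the cone. Since $(X,d)$ is a cone metric space the relation $<<$ is in force throughout, so $int P \neq \phi$, and this non-emptiness is precisely what lets the argument begin. First I would fix any $c_0 \in int P$. By the theorem from \cite{5} asserting that $0 \notin int P$ we have $c_0 \neq 0$, and hence $\norm{c_0} > 0$ since $\norm{\cdot}$ is a norm on $E$. As the normal constant $K$ is a positive real number, the quantity $\dfrac{\varepsilon}{K\norm{c_0}}$ is a well-defined strictly positive real.

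Next I would choose a real scalar $\alpha$ with $0 < \alpha < \dfrac{\varepsilon}{K\norm{c_0}}$ and put $c = \alpha c_0$. The theorem from \cite{5} which states that $\alpha x_0 \in int P$ whenever $x_0 \in int P$ and $\alpha(>0) \in \mathbb{R}$ then guarantees $c = \alpha c_0 \in int P$, that is $0 << c$. Finally, by homogeneity of the norm, $K\norm{c} = \alpha K \norm{c_0} < \dfrac{\varepsilon}{K\norm{c_0}}\cdot K\norm{c_0} = \varepsilon$, which is the required estimate.

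There is essentially no hard step here; the whole argument is a one-line rescaling once $int P \neq \phi$ is available. The only points I would record explicitly, so as to keep the proof self-contained, are that $int P$ is non-empty (forced by the cone metric space structure, since $<<$ is used throughout), that the chosen $c_0$ is nonzero (guaranteed by $0 \notin int P$), and that $K > 0$ so that dividing by $K\norm{c_0}$ is legitimate. With these observations in place the scalar $\alpha$ can always be selected and the conclusion follows at once.
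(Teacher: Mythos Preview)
Your argument is correct and is exactly the natural rescaling proof one would expect. Note that the paper itself does not supply a proof of this lemma; it is simply quoted from \cite{5}, so there is no in-paper proof to compare against. Your explicit choice $c=\alpha c_0$ with $0<\alpha<\varepsilon/(K\norm{c_0})$, together with the cited facts $0\notin int P$ and $\alpha x_0\in int P$ for $\alpha>0$, $x_0\in int P$, gives the conclusion immediately.
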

\begin{lem}\cite{5}\label{last}
Let $(X, d)$ be a cone metric space with normal cone $P$ and normal constant $K$. Then for each $c\in E$ with $0<<c$, there is a $\delta>0$ such that $|| x||< \delta$ implies $c - x\in int P$.
\end{lem}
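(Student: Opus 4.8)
The plan is to reduce the statement to the elementary fact that $c$ is an interior point of $P$, together with the translation that carries a ball centred at $c$ to one centred at the origin. First I would note that the hypothesis $0<<c$ means precisely $c-0=c\in int P$. Hence, by the very definition of the interior of a subset of the normed space $E$, there is a radius $\delta>0$ such that the open ball $B(c,\delta)=\left\{y\in E : \norm{y-c}<\delta\right\}$ is contained in $P$. Since $B(c,\delta)$ is an open subset of $E$ sitting inside $P$, every one of its points is an interior point of $P$, so in fact $B(c,\delta)\subset int P$.

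With this $\delta$ fixed, the second step is to take an arbitrary $x\in E$ with $\norm{x}<\delta$ and compute the distance from $c-x$ to $c$. One has $\norm{(c-x)-c}=\norm{-x}=\norm{x}<\delta$, so $c-x\in B(c,\delta)$. Combining this with the inclusion $B(c,\delta)\subset int P$ established above yields $c-x\in int P$, which is exactly what the lemma asserts.

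The argument is entirely topological, and it is worth remarking that it uses neither the normality of $P$ nor the normal constant $K$ appearing in the statement; these hypotheses are inherited from the ambient setting of \cite{5} but are not actually needed for this particular implication. The only place demanding a moment's care is the passage from ``$B(c,\delta)\subset P$'' to ``$B(c,\delta)\subset int P$'', which is justified by recalling that the interior of a set consists exactly of those points possessing an open neighbourhood contained in the set. I do not anticipate any genuine obstacle in carrying this out; the whole difficulty is simply to unwind the definitions in the right order.
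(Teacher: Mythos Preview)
Your argument is correct. The paper does not actually supply a proof of this lemma; it is simply quoted from \cite{5}, so there is no in-paper proof to compare against. Your approach---choosing $\delta>0$ so that the open ball $B(c,\delta)\subset P$, observing that an open subset of $P$ lies in $int\,P$, and then noting $\norm{(c-x)-c}=\norm{x}<\delta$---is the standard and essentially unique way to prove this, and your remark that normality of $P$ plays no role is accurate.
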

\begin{thm}
Let $(X, d)$ be a cone metric space with normal cone $P$ and normal constant $K$. Let $I$ be an ideal in $\mathbb{N}$ which has the property (AP). Then a sequence $x=\left\{x_n\right\}$ in $(X, d)$ is rough $I$-convergent of roughness degree $r$ to $x$ if and only if $\left\{d(x_n, x) - r\right\}$ is $I$-convergent to $0$, provided that $\left\{d(x_n, x) - r\right\}$ is a sequence in $P$.
\end{thm}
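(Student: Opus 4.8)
The plan is to prove both implications directly, using the two bridging lemmas that relate the norm topology of $E$ to the order generated by the cone $P$, namely Lemma \ref{lem1} and Lemma \ref{last}. Throughout I would write $z_n = d(x_n, x) - r$, so that by hypothesis $z_n \in P$ for every $n$, and recall that $\{z_n\}$ being $I$-convergent to $0$ means that for each real $\varepsilon > 0$ the set $\{n\in\mathbb{N} : \norm{z_n} \geq \varepsilon\} \in I$.

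For the forward direction, assume $x_n \xrightarrow{r - I} x$ and fix a real $\varepsilon > 0$. First I would invoke Lemma \ref{lem1} to produce an element $c \in \text{int}\, P$ with $K\norm{c} < \varepsilon$; this $c$ plays the role of the arbitrary $(0<<)$ increment in the definition of rough $I$-convergence. Applying that definition with $c$ gives $\{n : r + c - d(x_n, x) \notin \text{int}\, P\} \in I$, so its complement lies in $F(I)$. For any $n$ in the complement one has $z_n = d(x_n, x) - r << c$, hence $c - z_n \in \text{int}\, P \subseteq P$; combined with the standing hypothesis $z_n \in P$ this yields $0 \leq z_n \leq c$. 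Normality of $P$ then gives $\norm{z_n} \leq K\norm{c} < \varepsilon$. Consequently $\{n : \norm{z_n} \geq \varepsilon\}$ is contained in the ideal set $\{n : r + c - d(x_n, x) \notin \text{int}\, P\}$ and therefore belongs to $I$, which is exactly $I$-convergence of $\{z_n\}$ to $0$.

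For the converse, assume $\{z_n\}$ is $I$-convergent to $0$ and fix an arbitrary $(0<<)\varepsilon \in E$. Here I would apply Lemma \ref{last} with $c = \varepsilon$ to obtain $\delta > 0$ such that $\norm{y} < \delta$ forces $\varepsilon - y \in \text{int}\, P$. Since $\{n : \norm{z_n} \geq \delta\} \in I$, its complement lies in $F(I)$, and for each $n$ there one gets $\norm{z_n} < \delta$, whence $\varepsilon - z_n = r + \varepsilon - d(x_n, x) \in \text{int}\, P$. Taking complements shows $\{n : r + \varepsilon - d(x_n, x) \notin \text{int}\, P\} \subseteq \{n : \norm{z_n} \geq \delta\} \in I$, i.e. $x_n \xrightarrow{r - I} x$.

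The genuinely substantive step in both directions is the passage between ``small in norm'' and ``dominated in the cone order,'' which is precisely what Lemmas \ref{lem1} and \ref{last} supply; the hypothesis $z_n \in P$ is used only in the forward direction, to sandwich $z_n$ between $0$ and $c$ so that the normal-constant estimate applies. I would also flag that this direct argument never invokes the property (AP): the equivalence appears to hold for any admissible ideal, with (AP) becoming relevant only if one instead prefers to route the proof through the rough $I^*$-characterisation of Theorem \ref{newone}.
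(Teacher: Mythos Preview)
Your proof is correct and, in fact, more direct than the paper's. The paper establishes both directions by routing through rough $I^*$-convergence: for the forward direction it invokes Theorem~\ref{newone} (which requires (AP)) to pass from rough $I$-convergence to rough $I^*$-convergence, extracts a set $M\in F(I)$ along which the subsequence $\{x_n\}_{n\in M}$ is rough convergent, and then applies Lemma~\ref{lem1} and normality exactly as you do to deduce norm convergence of $\{d(x_n,x)-r\}_{n\in M}$ to $0$, hence $I^*$- and $I$-convergence of the full sequence. The converse is handled symmetrically, using (AP) and first countability to upgrade $I$-convergence of $\{d(x_n,x)-r\}$ to $I^*$-convergence before invoking Lemma~\ref{last} on the resulting subsequence. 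You bypass the $I^*$ detour entirely and manipulate the ideal sets directly, applying Lemmas~\ref{lem1} and~\ref{last} at the same junctures but without ever needing a distinguished $M\in F(I)$. Your closing remark is therefore on point: the equivalence holds for any admissible ideal, and the paper's (AP) hypothesis is an artifact of its chosen proof route rather than a genuine requirement of the theorem.
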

\begin{proof}
Firstly let us assume that $x=\left\{x_n\right\}$ is rough $I$-convergent of roughness degree $r$ to $x$. Since $I$ has the property (AP) by theorem \ref{newone}, the sequence $x=\left\{x_n\right\}$ is also rough $I^*$-convergent of roughness degree $r$ to $x$. So there exists a set $M=\{m_1<m_2<\cdots<m_k<\cdots\}\in F(I)$ such that the subsequence $\{x_n\}_{n\in M}$ is rough convergent of roughness degree $r$ to $x$. Let $(0<)\varepsilon\in \mathbb{R}$ be given. Then according to lemma \ref{lem1} we have an element $(0<<)c \in E$ with $K\norm{c} < \varepsilon$. Now since the sequence $\{x_n\}_{n\in M}$ is rough convergent of roughness degree $r$ to $X$, so for this $(0<<)c$ we have an element $l\in \mathbb{N}$ such that $d(x_{m_k}, x ) << r + c $ for all $k\geq l$. That is $d(x_{m_k}, x) - r < < c$ for all $k\geq l$. Now as $P$ is normal cone with normal constant $K$, therefore we have $\norm{d(x_{m_k},x) - r}\leq K\norm{c}<\varepsilon$ for all $k\geq l$. Since this is true for any arbitrary $(0<)\varepsilon\in \mathbb{R}$, by lemma \ref{correction} we see that the sequence $\left\{d(x_n, x) - r\right\}_{n\in M}$ converges to $0$. This implies that the sequence $\left\{d(x_n, x) - r\right\}$ is $I^*$-convergent to $0$ and hence it is also $I$-convergent to $0$.\\

Conversely suppose that the sequence $\left\{d(x_n, x) - r\right\}$ is $I$-convergent to $0$. Since the ideal $I$ has the property (AP) and any cone metric space is first countable, therefore the sequence $\left\{d(x_n, x) - r\right\}$ is also $I^*$-convergent to $0$. Thus there exists a set $M=\{m_1< m_2<\cdots<m_k< \cdots\}\in F(I)$ such that $\{d(x_n, x) - r\}_{n\in M}$ is convergent to $0$. Let $c\in E$ with $0<< c$. Then by lemma \ref{last}, there exists a $\delta > 0$, such that $\norm{x}< \delta$ implies $ c- x\in int P \rightarrow (i)$. Now since $\{d(x_n, x) - r\}_{n\in M}$ is convergent to $0$, so for this $\delta$ there exists a $k\in \mathbb{N}$ such that $\norm{d(x_{m_p}, x) - r }< \delta$ for all $p\geq k$. So by $(i)$,\: $ c - (d(x_{m_p}, x) - r) \in int P$ for all $p\geq k$. Thus $d(x_{m_p}, x) << r + c$ for all $p\geq k$. Therefore $\left\{n\in \mathbb{N} : r + c - d(x_n , x)\notin int P\right\}\subset \mathbb{N} \setminus M \cup \left\{m_1<m_2<\cdots< m_{k -1}\right\}$ and hence $\left\{n \in \mathbb{N} : r + c - d(x_n, x) \notin int P\right\}\in I$. Therefore $\left\{x_n\right\}$ is rough $I$-convergent of roughness degree $r$ to $x$.  
\end{proof}
\begin{thm}
Let $(X, d)$ be a cone metric space with normal cone $P$ and normal constant $K$. Also let $\left\{x_n\right\}$ and $\left\{y_n\right\}$ be two sequence in $(X, d)$ rough $I$-convergent of roughness degree $\frac{1}{4K + 2} r$ to $x$ and $y$ respectively. Then the sequence $\left\{z_n\right\}$ in $E$ is rough $I$-convergent to $d(x, y)$ of roughness degree $||r||$ where $z_n=d(x_n, y_n)$ for all $n\in \mathbb{N}$.
\end{thm}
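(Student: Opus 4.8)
The plan is to verify the definition of rough $I$-convergence for the sequence $\{z_n\}$ in the Banach space $E$ directly: given an arbitrary real $\varepsilon>0$, I must produce a set belonging to $F(I)$ on which $\norm{z_n-d(x,y)}<\norm{r}+\varepsilon$, so that the exceptional set $\{n\in\mathbb{N}:\norm{z_n-d(x,y)}\ge \norm{r}+\varepsilon\}$ is contained in its complement and hence lies in $I$. Write $\rho=\frac{1}{4K+2}r$; since $0<<r$ and $\frac{1}{4K+2}>0$, the scalar-multiple result from \cite{5} gives $\rho\in int P$. First I would use Lemma \ref{lem1} to choose $\varepsilon_1\in int P$ with $(4K+2)\norm{\varepsilon_1}<\varepsilon$, pushing all the slack into this single quantity.

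Next, using that $\{x_n\}$ and $\{y_n\}$ are rough $I$-convergent of degree $\rho$ to $x$ and $y$ respectively, applied with the cone-positive quantity $\varepsilon_1$, both sets $A_1=\{n:\rho+\varepsilon_1-d(x_n,x)\notin int P\}$ and $A_2=\{n:\rho+\varepsilon_1-d(y_n,y)\notin int P\}$ lie in $I$, so $M:=A_1^\complement\cap A_2^\complement\in F(I)$. For each $n\in M$ we then have $d(x_n,x)<<\rho+\varepsilon_1$ and $d(y_n,y)<<\rho+\varepsilon_1$, hence $d(x_n,x)\le\rho+\varepsilon_1$ and $d(y_n,y)\le\rho+\varepsilon_1$ in the cone order.

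The heart of the argument is the pair of cone triangle inequalities $d(x_n,y_n)\le d(x_n,x)+d(x,y)+d(y_n,y)$ and $d(x,y)\le d(x_n,x)+d(x_n,y_n)+d(y_n,y)$, which together say that $-(d(x_n,x)+d(y_n,y))\le w_n\le d(x_n,x)+d(y_n,y)$, where $w_n=d(x_n,y_n)-d(x,y)$. For $n\in M$ the upper bound is $\le v:=2(\rho+\varepsilon_1)$, so $-v\le w_n\le v$. From this I would derive $0\le v+w_n\le 2v$, so normality yields $\norm{v+w_n}\le K\norm{2v}=2K\norm{v}$ and therefore $\norm{w_n}\le\norm{v+w_n}+\norm{v}\le(2K+1)\norm{v}$. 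Since $\norm{v}\le 2(\norm{\rho}+\norm{\varepsilon_1})$ and $\norm{\rho}=\frac{1}{4K+2}\norm{r}$, this gives, for every $n\in M$, the estimate $\norm{w_n}\le(4K+2)\norm{\rho}+(4K+2)\norm{\varepsilon_1}=\norm{r}+(4K+2)\norm{\varepsilon_1}<\norm{r}+\varepsilon$.

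Consequently $\{n\in\mathbb{N}:\norm{z_n-d(x,y)}\ge\norm{r}+\varepsilon\}\subseteq\mathbb{N}\setminus M\in I$, and since $\varepsilon>0$ was arbitrary, $\{z_n\}$ is rough $I$-convergent to $d(x,y)$ of roughness degree $\norm{r}$. The only delicate point is the bookkeeping of constants: the factor $(2K+1)$ produced by the symmetric normality estimate must be matched against the factor $2$ in $v=2(\rho+\varepsilon_1)$ and against the prescribed degree $\frac{1}{4K+2}r$, and it is exactly this interplay that makes $(4K+2)\norm{\rho}$ collapse to $\norm{r}$. Getting this cancellation right, together with choosing $\varepsilon_1$ small enough via Lemma \ref{lem1} to absorb the residual $(4K+2)\norm{\varepsilon_1}$, is where the care is required; the triangle and normality steps themselves are routine.
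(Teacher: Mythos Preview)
Your proof is correct and follows essentially the same route as the paper: pick a small $c\in int P$ (you via Lemma \ref{lem1}, the paper by writing $c=\frac{\varepsilon x}{2\norm{x}(4K+2)}$ explicitly), intersect the two filter sets from the rough $I$-convergence hypotheses, use the two triangle inequalities to sandwich $w_n=d(x_n,y_n)-d(x,y)$ between $\pm 2(\rho+c)$, shift into $P$ and apply normality to get $\norm{w_n}\le(4K+2)\norm{\rho+c}\le\norm{r}+(4K+2)\norm{c}<\norm{r}+\varepsilon$. The only cosmetic differences are that the paper shifts by $v-w_n$ rather than $v+w_n$ and spells out each cone-membership step separately, whereas you package the same estimates more compactly.
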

\begin{proof}
Let $\varepsilon >0$ be given and $x\in int P$. Then $c=\frac{ \varepsilon x}{2\norm{x}(4K + 2)}\in int P$. Obviously $\norm{c}<\frac{\varepsilon}{2(4K +2)}$. Now as $\left\{x_n\right\}$ and $\left\{y_n\right\}$ both are rough $I$-convergent of same roughness degree $\frac{1}{4K + 2} r$ to $x$ and $y$ respectively, therefore for $(0<<)c\in E$ the sets $A_1=\left\{n\in \mathbb{N} : c + \frac{1}{4K + 2} r - d(x_n, x)\notin int P\right\}$ and $A_2=\left\{n\in \mathbb{N} : c + \frac{1}{4K + 2} r - d(y_n, y)\notin int P\right\}$ both belongs to $I$. Therefore the set $A_1^\complement=\mathbb{N}\setminus A_1$ and $A_2^\complement=\mathbb{N}\setminus A_2$ both belongs to $F(I)$. Therefore $M=A_1^\complement \cap A_2^\complement\in F(I)$. Let us choose an element $m\in A_1^\complement \cap A_2^\complement$. So $(c + \frac{1}{4K + 2} r) - d(x_m, x)\in int P\rightarrow (i)$ and $(c + \frac{1}{4K + 2} r) - d(y_m, y) \in int P \rightarrow (ii)$. From $(i)$ and $(ii)$ we get $(c + \frac{1}{4K + 2} r) - d(x_m, x) + (c + \frac{1}{4K + 2} r) - d(y_m, y) =2 (c + \frac{1}{4K + 2} r)  - (d(x_m, x) + d(y_m, y))\in int P\rightarrow(iii)$.\\
Again $d(x, y)\leq d(x_m, x) + d(x_m, y)$ i.e., $d(x_m, x) + d(x_m, y) - d(x, y)\in P\rightarrow(iv)$. Also as $d(x_m, y) \leq d(x_m, y_m) + d(y_m, y)$, so $d(x_m , y_m) + d(y_m, y) - d(x_m, y)\in P\rightarrow(v)$. Thus from $(iv)$ and $(v)$ we get $d(x_m,x) + d(y_m, y) + d(x_m, y_m) - d(x, y)\in P\rightarrow (vi)$. Also as $d(x, y_m)\leq d(x, y) + d(y, y_m)$ i.e., $d(x, y) + d(y, y_m) - d(x, y_m)\in P$ and as $d(x_m, y_m)\leq d(x_m ,x) + d(x, y_m)$ i.e., $ d(x_m ,x) + d(x, y_m)- d(x_m, y_m)\in P$ so their sum also belongs to $P$. That is $d(x, y) + d(y, y_m) + d(x_m,x) - d(x_m, y_m) \in P\rightarrow(vii)$. Now from $(iii)$ and $(vii)$ we get $2(c + \frac{1}{4k + 2} r) + d(x, y) - d(x_m, y_m) \in int P\rightarrow(viii)$. Again from $(iii)$ and $(vi)$ we have $2(c + \frac{1}{4K + 2} r) + d(x_m, y_m) - d(x, y)\in int P$, i.e., $4(c + \frac{1}{4K + 2} r) - ( 2(c + \frac{1}{4K + 2} r) +d(x, y) - d(x_m, y_m)) \in int P$. This implies that $ 2(c + \frac{1}{4K + 2} r) +d(x, y) - d(x_m, y_m)<< 4(c + \frac{1}{4K + 2} r)$. Also from $(viii)$ we have $2(c + \frac{1}{4K + 2} r) + d(x, y)- d(x_m, y_m)\in int P$. Again we have $0<<  4(c + \frac{1}{4K + 2} r)$. Now as $P$ is normal therefore $|| 2(c + \frac{1}{4K + 2} r) +d(x, y) - d(x_m, y_m)|| \leq K ||4(c + \frac{1}{4K + 2} r)||\rightarrow(ix)$. Also $|| d(x, y) - d(x_m, y_m)|| = || d(x, y) - d(x_m, y_m) + 2(c + \frac{1}{4K + 2} r) - 2(c + \frac{1}{4K + 2} r)||\leq ||d(x, y) - d(x_m, y_m) + 2(c + \frac{1}{4K + 2} r)|| + 2 |
|(c + \frac{1}{4K + 2} r)||\rightarrow (x)$. Thus from $(x)$ using $(ix)$ we get $|| d(x, y) - d(x_m, y_m)|| \leq 4K ||(c + \frac{1}{4K + 2} r)|| + 2 ||(c + \frac{1}{4K + 2} r)|| = (4K + 2)||(c + \frac{1}{4K + 2} r)||\leq (4K + 2)||c|| + (4K + 2)\frac{1}{4K + 2} || r|| = \frac{\varepsilon}{2} + || r||< \varepsilon + || r|| \rightarrow(xi)$. Since the inequality in $(xi)$ holds for any arbitrary $m\in M$, therefore $\left\{n\in \mathbb{N} : || d(x_n, y_n ) - d(x, y) || \geq \varepsilon + || r||\right\}\subset \mathbb{N}\setminus M$. Hence the set  $\left\{n\in \mathbb{N} : || d(x_n, y_n ) - d(x, y) || \geq \varepsilon + || r||\right\}\in I$. Hence the sequence $\left\{z_n\right\}$ in $E$ is rough $I$-convergent to $d(x, y)\in E$ of roughness degree $|| r||$.
\end{proof}
 
\end{document}